\begin{document}
\begin{frontmatter}
% "Title of the paper"
\title{Statistical bounds for entropic optimal transport: sample complexity and the central limit theorem}
\runtitle{Statistical bounds for entropic optimal transport}

\begin{aug}
\author{\fnms{Gonzalo}~\snm{Mena}\thanksref{t1}\ead[label=mena]{gomena@fas.harvard.edu}}
\and
\author{\fnms{Jonathan}~\snm{Weed}\thanksref{t2}\ead[label=jon]{jweed@mit.edu}}

\affiliation{Harvard University\\Massachusetts Institute of Technology}
\thankstext{t1}{This work was supported by a Harvard Data Science Initiative Fellowship.}
\thankstext{t2}{This work was supported in part by the Josephine de K\'arm\'an Fellowship.}

\address{{Gonzalo Mena}\\
{Department of Statistics} \\
{Harvard University}\\
{1 Oxford St \#7,}\\
{Cambridge, MA 02138-4307, USA}\\
\printead{mena}
}

\address{{Jonathan Weed}\\
{Department of Mathematics} \\
{Massachusetts Institute of Technology}\\
{77 Massachusetts Avenue,}\\
{Cambridge, MA 02139-4307, USA}\\
\printead{jon}
}

\runauthor{Mena and Weed}
\end{aug}

\begin{abstract}
We prove several fundamental statistical bounds for entropic OT with the squared Euclidean cost between subgaussian probability measures in arbitrary dimension.
First, through a new sample complexity result we establish the rate of convergence of entropic OT for empirical measures.
Our analysis improves exponentially on the bound of Genevay et al.~(2019) and extends their work to unbounded measures.
Second, we establish a central limit theorem for entropic OT, based on techniques developed by Del Barrio and Loubes~(2019).
Previously, such a result was only known for finite metric spaces.
As an application of our results, we develop and analyze a new technique for estimating the entropy of a random variable corrupted by gaussian noise.
\end{abstract}

\begin{keyword}[class=AMS]
\kwd[]{Statistics}
%\kwd{}
%\kwd[; secondary ]{}
\end{keyword}
\begin{keyword}[class=KWD]
Optimal Transport, Entropic Regularization, Central Limit Theorem, Sample Complexity
\end{keyword}

\end{frontmatter}

\section{Introduction}
Optimal transport is an increasingly popular tool for the analysis of large data sets in high dimension, with applications in domain adaptation~\citep{CouFlaTui14, CouFlaTui17}, image recognition~\citep{LiWanZha13, RubTomGui00, SanLin11}, and word embedding~\citep{AlvJaa18,GraJouBer18}.
Its flexibility and simplicity have made it an attractive choice for practitioners and theorists alike, and its ubiquity as a machine learning tool continues to grow~\citep[see, e.g.,][for surveys]{Peyre2019,KolParTho17}.

Much of the recent interest in optimal transport has been driven by algorithmic advances, chief among them the popularization of entropic regularization as a tool of solving large-scale OT problems quickly~\citep{Cuturi2013}.
Not only has this proposal been shown to yield near-linear-time algorithms for the original optimal transport problem~\citep{Altschuler2017}, but it also appears to possess useful \emph{statistical} properties which make it an attractive choice for machine learning applications~\citep{Rigollet2018,Genevay2017,SchShuTab17,Montavon2016}.
For instance, in a recent breakthrough work, \citet{Genevay2018} established that even though the empirical version of standard OT suffers from the ``curse of dimensionality''~\citep[see, e.g.][]{Dud69}, the empirical version of entropic OT always converges at the parametric $1/\sqrt n$ rate for compactly supported probability measures.
This result suggests that entropic OT may be significantly more useful than unregularized OT for inference tasks when the dimension is large.
However, obtaining rigorous guarantees for the performance of entropic OT in practice requires a more thorough understanding of its statistical behavior.

\subsection{Summary of contributions}
We prove new results on the relation between the population and empirical version of the entropic cost, that is, between $S(P,Q)$ and $S(P_n,Q_n)$ (defined in Section~\ref{sec:background}, below).
These results give the first characterization of the large-sample behavior of entropic OT for unbounded probability measures in arbitrary dimension.
Specifically, we obtain: \textbf{(i)} New sample complexity bounds on $E |S(P, Q) - S(P_n, Q_n)|$ which improve on the results of \citet{Genevay2018} by an exponential factor and which apply to unbounded measures (Section~\ref{sec:sample}).
\textbf{(ii)} A central limit theorem characterizing the fluctuations $S(P_n, Q_n) - E S(P_n, Q_n)$ when $P$ and $Q$ are subgaussian (Section~\ref{sec:CLT}).
Such a central limit theorem was previously only known for probability measures supported on a finite number of points~\citep{BigCazPap17,KlaTamMun18}.
We use completely different techniques, inspired by recent work of~\cite{Del2019}, to prove our theorem for general subgaussian distributions.

As an application of our results, we show how entropic OT can be used to shed new light on the entropy estimation problem for random variables corrupted by subgaussian noise (Section~\ref{sec:entropy}).
This problem has gained recent interest in machine learning \citep{Goldfeld2018a,Goldfeld2018b} as a tool for obtaining a theoretically sound understanding of the \textit{Information Bottleneck Principle} in deep learning \citep{Tishby2015}.
We design and analyze a new estimator for this problem based on entropic OT.

Finally, we provide simulations which give empirical validation for our theoretical claims (Section~\ref{sec:experiments}).

\subsection{Background and preliminaries}\label{sec:background}
Let $P,Q\in \mathcal{P}(\mathbb{R}^d)$ be two probability measures and let $P_n$ and $Q_n$ be the empirical measures from the independent samples $\{X_i\}_{i\leq n}\sim P^n$ and \mbox{$\{Y_i\}_{i\leq n}\sim Q^n$}. We define the squared Wasserstein distance between $P$ and $Q$ \citep{Villani2008} as follows:
\begin{equation}\label{eq:ot}
W_2^2(P, Q) := \inf_{\pi\in \Pi(P,Q)} \left[\int_{\mx\times\my} \frac 1 2 \|x - y\|^2 \,\dd\pi(x,y)\right]\,,\end{equation}
where $\Pi(P,Q)$ is the set of all joint distributions with marginals equal to $P$ and $Q$, respectively.
We focus on a entropy regularized version of the above cost \citep{Cuturi2013,Peyre2019}, defined as

 \begin{equation}\label{eq:sink} S_\epsilon(P,Q) := \inf_{\pi\in \Pi(P,Q)} \left[\int_{\mx\times\my} \frac 12 \|x - y\|^2 \,\dd\pi(x,y) + \epsilon H(\pi\lvert P\otimes Q)\right]\,,
 \end{equation}
where $H(\alpha\lvert \beta)$ denotes the relative entropy between probability measures $\alpha$ and $\beta$ defiend by $\int \log \frac{d\alpha}{d\beta}(x)d\alpha(x)$ if $\alpha \ll \beta$ and $+ \infty$ otherwise.
By rescaling the measures $P$ and $Q$ and the regularization parameter $\epsilon$, it suffices to analyze the case $\epsilon = 1$, which we denote by $S(P, Q)$.
Note that we consider the squared cost~$\frac 12 \| \cdot \|^2$ throughout.
While some of our results extend to other costs, we leave a full analysis of the general case to future work.

The general theory of entropic OT~\citep{Csiszar1975} implies that $S(P, Q)$ possesses a dual formulation:
\begin{multline}\label{eq:dual}
S(P, Q) = \sup_{f \in L_1(P), g \in L_1(Q)} \int f(x) \, \dd P(x) + \int g(y) \, \dd Q(y) \\ - \int e^{f(x) + g(y) - \frac{1}{2}||x- y||^2} \, \dd P(x) \dd Q(y) + 1\,,
\end{multline}
and that as long as $P$ and $Q$ have finite second moments, the supremum is attained at a pair of optimal potentials $(f, g)$ satisfying
\begin{equation}
\begin{split}
\int e^{f(x) + g(y) - \frac{1}{2}||x- y||^2} \, \dd Q(y)  & = 1 \quad \text{$P$-a.s.}\,, \\
\int e^{f(x) + g(y) - \frac{1}{2}||x- y||^2} \, \dd P(x)  & = 1 \quad \text{$Q$-a.s.}\,
\end{split}
\label{eq:dual-opt}
\end{equation}
Conversely, any $f \in L_1(P), g \in L_1(Q)$ satisfying~\eqref{eq:dual-opt} are optimal potentials.

We focus throughout on subgaussian probability measures.
We say that a distribution $P \in \mathcal{P}(\RR^d)$ is~$\sigma^2$-subgaussian for $\sigma \geq 0$ if $E_P e^{\frac{\|X\|^2}{2 d\sigma^2}} \leq  2$.
By Jensen's inequality, if $E_P e^{\frac{\|X\|^2}{2 d\sigma^2}} \leq C$ for any constant~$C \geq 2$, then $P$ is $C \sigma^2$-subgaussian. 
Note that if $P$ is subgaussian, then $E_P e^{v^\top X} < \infty$ for all $v \in \RR^d$.
Conversely, standard results~\citep[see, e.g.,][]{Vershynin2018} imply that our definition is satisfied if $E_P e^{u^\top X} \leq e^{\|u\|^2 \sigma^2/2}$ for all $u \in \RR^d$.

\section{Sample complexity for the entropic transportation cost for general subgaussian measures}
\label{sec:sample}
One rigorous statistical benefit of entropic OT is its \emph{sample complexity}, i.e., the minimum number of samples required for the empirical entropic OT cost $S(P_n, Q_n)$ to be an accurate estimate of $S(P, Q)$.
As noted above, unregularized OT suffers from the curse of dimensionality: in general, the Wasserstein distance $W_2^2(P_n, Q_n)$ converges to $W_2^2(P, Q)$ no faster than $n^{-1/d}$ for measures in $\RR^d$.
Strikingly, \cite{Genevay2018} established that the statistical performance of the entropic OT cost is significantly better.
They show:\footnote{We have specialized their result to the squared Euclidean cost.}
\begin{theorem}[{\citealp[Theorem 3]{Genevay2018}}]\label{thm:genevay}
Let $P$ and $Q$ be two probability measures on a bounded domain in $\RR^d$ of diameter $D$.
Then
\begin{equation}\label{geneeq}
\sup_{P,Q} E_{P,Q}|S_\epsilon(P,Q) - S_\epsilon(P_n,Q_n)|\leq K_{D,d} \left(1+\frac{1}{\epsilon^{\lfloor d/2 \rfloor}}\right) \frac{e^{D^2/\epsilon}}{\sqrt{n}}\,,
\end{equation}
where $K_{D, d}$ is a constant depending on $D$ and $d$.
\end{theorem}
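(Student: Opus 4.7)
The natural starting point is the dual representation \eqref{eq:dual}: writing $S$ as a supremum of a linear-plus-exponential functional of a pair of potentials $(f,g)$ reduces the task to controlling the uniform deviation of certain empirical integrals. I would split the error via the triangle inequality,
\begin{equation*}
|S(P,Q)-S(P_n,Q_n)| \leq |S(P,Q)-S(P_n,Q)| + |S(P_n,Q)-S(P_n,Q_n)|,
\end{equation*}
so that each term involves replacing only one marginal by its empirical version. Since the dual objective is a supremum over $(f,g)$, each term is bounded above by a supremum, over optimal-or-near-optimal potentials, of quantities of the form $\bigl|\int f\,\dd(P-P_n)\bigr|$, together with an analogous deviation of the exponential cross-term, which can be linearized by the same device.

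The heart of the proof is to constrain the candidate pair $(f,g)$ to a class $\mathcal{F}\times\mathcal{G}$ with controlled metric entropy. The optimality relations \eqref{eq:dual-opt} express each potential as a log-integral-exp of the other against the smooth cost $\tfrac12\|x-y\|^2$, so on a domain of diameter $D$ both potentials are infinitely differentiable, and a direct induction on the order of the derivative shows that their $k$-th derivatives grow at worst like $(D/\epsilon)^k$. This places the optimal potentials inside a ball in a Gaussian-kernel RKHS whose radius depends only on $D$ and $\epsilon$. Expanding in a tensor-Hermite basis and truncating to finitely many modes yields a finite-dimensional approximation of $\mathcal{F}$ (and $\mathcal{G}$) whose log-covering number can be controlled; the dimension $d$ enters here through the tensor-product structure of the basis, which is what produces the $\epsilon^{-\lfloor d/2\rfloor}$ factor.

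Once the supremum is restricted to such a class, standard symmetrization and Dudley's entropy integral (or direct Rademacher bounds for RKHS balls) yield the desired $O(n^{-1/2})$ rate. The prefactor $e^{D^2/\epsilon}$ appears because the exponential integrand in the dual is bounded uniformly by $e^{D^2/\epsilon}$ on the support of $P\otimes Q$, and this bound propagates twice: into the Lipschitz constant of the softmax that links the two potentials through \eqref{eq:dual-opt}, and into the uniform envelope required for the empirical-process bound on the nonlinear cross-term.

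The delicate step, and the main obstacle, is quantifying the smoothness of the optimal potentials sharply enough to obtain the exponent $\lfloor d/2 \rfloor$ rather than a larger polynomial in $d$, and transferring that smoothness into a finite-dimensional covering with the right entropy rate. The exponential factor $e^{D^2/\epsilon}$ is essentially intrinsic to any envelope-based approach in the bounded setting, and removing it, together with extending the result beyond compact supports, is presumably exactly the point of the subgaussian analysis that follows in this section.
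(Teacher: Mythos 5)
You should note first that the paper does not prove this statement at all: Theorem~\ref{thm:genevay} is quoted verbatim (specialized to the quadratic cost) from \citet{Genevay2018}, so there is no internal proof to compare against; the paper's own technical work (Proposition~\ref{noexp} onward) is aimed at \emph{improving} this bound. Measured against the original Genevay et al.\ argument, your sketch is essentially the same route: pass to the dual \eqref{eq:dual}, split via the one-marginal-at-a-time triangle inequality, use the optimality relations \eqref{eq:dual-opt} to show the potentials are smooth with derivative bounds depending only on $D$ and $\epsilon$, place them in a ball of a reproducing-kernel space, and finish with Rademacher/Dudley bounds at rate $n^{-1/2}$. Two details differ from the cited proof without affecting correctness of the outline: Genevay et al.\ bound the potentials in a Sobolev ball $H^s$ with $s=\lfloor d/2\rfloor+1$ (whose norm scales like $1+\epsilon^{-\lfloor d/2\rfloor}$, which is exactly where that factor comes from) and invoke the kernel Rademacher-complexity bound directly, rather than your Gaussian-RKHS embedding with a truncated tensor-Hermite expansion; the latter would need its own entropy computation to recover the same exponent. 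The one genuinely underspecified step in your sketch is the treatment of the nonlinear cross-term $\int e^{(f+g-c)/\epsilon}$: saying it ``can be linearized by the same device'' hides the fact that in the bounded setting this is precisely what forces the $e^{D^2/\epsilon}$ envelope into the bound. You correctly identify that this factor is intrinsic to the envelope-based approach; the observation of the present paper (Proposition~\ref{noexp}) is that by choosing potentials satisfying \eqref{eq:dual-opt} for \emph{all} $x,y$, the cross-term cancels identically against the linear term, which is exactly how the exponential factor is removed in Theorem~\ref{theo1}.
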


This impressive result offers powerful evidence that entropic OT converges significantly faster than its unregularized counterpart.
The drawbacks of this result are that it applies only to bounded measures, and, perhaps more critically in applications, the rate scales \emph{exponentially} in $D$ and $1/\epsilon$, even in dimension $1$.
Therefore, while the qualitative message of Theorem~\ref{thm:genevay} is clear, it does not offer useful quantitative bounds as soon as the measure is unbounded or lies in a set of large diameter.

Our first theorem is a significant sharpening of Theorem~\ref{thm:genevay}.
We first state it for the case where $\epsilon = 1$.

\begin{theorem} \label{theo1}
If $P$ and $Q$ are $\sigma^2$-subgaussian, then
\begin{equation}
E_{P,Q}|S(P,Q) - S(P_n,Q_n)| \leq K_d (1 + \sigma^{\lceil 5d/2\rceil + 6})\frac 1{\sqrt n} \,.
\end{equation}
\end{theorem}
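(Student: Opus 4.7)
The plan is to reduce the problem to controlling an empirical process indexed by a well-chosen class $\cF$ of candidate dual potentials. By the dual formulation~\eqref{eq:dual}, evaluating the suprema on the left-hand side against the optimizer of the right-hand side (once with the population optimizers plugged in to lower bound $S(P_n,Q_n)$, once with the empirical optimizers plugged in to lower bound $S(P,Q)$) yields the standard one-sided bound
\[
|S(P,Q) - S(P_n, Q_n)| \le \sup_{(f,g) \in \cF} \Bigl| (P_n - P)f + (Q_n - Q)g - (P_n \otimes Q_n - P \otimes Q)\phi_{f,g}\Bigr|,
\]
where $\phi_{f,g}(x,y) \defeq e^{f(x)+g(y) - \frac{1}{2}\|x-y\|^2}$, provided $\cF$ contains both the population and (with high probability) the empirical optimizers. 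The proof then splits into two independent parts: (i) identifying such a class $\cF$ with quantitative, $\sigma$-polynomial regularity, and (ii) bounding the resulting empirical process at rate $K_d(1 + \sigma^{\lceil 5d/2\rceil + 6})n^{-1/2}$.

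For part (i), I would exploit the special structure of the squared cost. Rewriting the Schr\"odinger system~\eqref{eq:dual-opt} as
\[
f(x) = -\tfrac{1}{2}\|x\|^2 - \log \int e^{g(y) + x^\top y - \frac{1}{2}\|y\|^2}\, \dd Q(y),
\]
the shifted potential $\tilde f(x) \defeq f(x) + \frac{1}{2}\|x\|^2$ is (the negative of) a log-Laplace transform of a $g$-tilted version of $Q$, and hence is smooth and convex. Differentiating under the integral expresses each derivative $\partial^\alpha \tilde f$ of order $k$ as a cumulant in the tilted measure, which I expect to show, by induction on $k$, is polynomial in $\sigma$ and grows at most like $(1+\|x\|)^{k}$ in $x$; the symmetric statement holds for $\tilde g$. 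This places $(\tilde f, \tilde g)$ in a weighted H\"older class on $\RR^d$ of smoothness order $s = \lceil d/2\rceil + 1$ with explicit $\sigma$-polynomial radius.

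For part (ii), I would decompose $f = -\frac{1}{2}\|\cdot\|^2 + \tilde f$ and likewise for $g$. The quadratic pieces of the potentials give rise only to fourth-moment empirical process terms, bounded at rate $\sigma^2/\sqrt n$ by subgaussianity. For the smooth remainders restricted to a ball of radius $R \sim \sigma\sqrt{\log n}$, Dudley's entropy integral applied to the weighted H\"older class converges as soon as $s > d/2$; outside that ball, a peeling argument based on the subgaussian tails of $P$ and $Q$ makes the contribution negligible. Combining $R$, $s$, and the $\sigma$-dependent H\"older radius while tracking constants carefully produces the exponent $\lceil 5d/2 \rceil + 6$. The exponential integrand $\phi_{f,g}$ needs extra care, but using the optimality relations~\eqref{eq:dual-opt} to pointwise bound it by a H\"older-regular integrable envelope places it in the same framework.

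The main obstacle, I expect, is closing the circular dependence in part (i): the bound on $\tilde f$ depends on $g$ through the tilt, and vice versa. I would resolve this by first establishing a crude qualitative estimate — essentially that $\tilde f - \tilde f(0)$ and $\tilde g - \tilde g(0)$ grow at most quadratically with $\sigma$-polynomial constants, obtained by comparing the optimal dual value to the trivial certificate $(f,g) \equiv 0$ — and then bootstrapping this to polynomial bounds on higher derivatives by iterating the log-Laplace representation. The remaining ingredients (moment bounds for subgaussian tilts, entropy estimates for weighted H\"older classes on $\RR^d$, and peeling over subgaussian tails) are technical but standard.
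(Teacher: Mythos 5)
Your overall architecture (dual-optimality reduction to an empirical process over a class of potentials, derivative bounds via moments of the exponentially tilted measure, and Dudley chaining over a weighted H\"older class of smoothness $s=\lceil d/2\rceil+1$) matches the paper's, and the regularity part (i), including the bootstrapping from crude potential bounds to cumulant-type derivative bounds, is essentially Propositions~\ref{extend-potentials} and~\ref{prop:pointwise_derivative_bound} (modulo a sign slip: the optimality condition gives $f(x)-\tfrac12\|x\|^2=-\log\int e^{g(y)-\frac12\|y\|^2+x\cdot y}\,\dd Q(y)$, so the ``nice'' part is $f-\tfrac12\|\cdot\|^2$, and it is concave, not convex; this does not affect your plan). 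The genuine gap is your treatment of the exponential term $(P_n\otimes Q_n-P\otimes Q)\phi_{f,g}$ with $\phi_{f,g}(x,y)=e^{f(x)+g(y)-\frac12\|x-y\|^2}$. You propose to ``pointwise bound it by a H\"older-regular integrable envelope'' using \eqref{eq:dual-opt}, but the optimality relations are integral constraints and give no useful pointwise control: with the available bounds $f(x)\leq\tfrac12(\|x\|+\sqrt{2d}\sigma)^2$ and likewise for $g$, the exponent $f(x)+g(y)-\tfrac12\|x-y\|^2$ can grow like $x\cdot y$, so any pointwise envelope is exponentially large in $\sigma\|x\|\|y\|$, and chaining against it reintroduces exactly the exponential constants of \citet{Genevay2018} that the theorem is meant to remove. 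Moreover, even at the population optimizer the two-sample exponential term does not vanish: writing $(P_n\otimes Q_n-P\otimes Q)\phi=((P_n-P)\otimes Q_n)\phi+(P\otimes(Q_n-Q))\phi$, the second piece is zero by \eqref{eq:dual-opt}, but the first involves $\int\phi(x,y)\,\dd Q_n(y)$, an empirical (not population) integral, so the cancellation you would need is only approximate and must itself be controlled by a second, delicate empirical-process argument for exponentially tilted integrands.

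The paper's fix is precisely designed to avoid this: extend the potentials so that \eqref{eq:dual-opt} holds for \emph{all} $x,y$ (Proposition~\ref{extend-potentials}), and compare one marginal at a time, $|S(P,Q)-S(P_n,Q)|$ and then $|S(P_n,Q)-S(P_n,Q_n)|$ (Proposition~\ref{noexp} and Corollary~\ref{cor:two_sample}). In each one-sample comparison both relevant potential pairs satisfy the marginal condition in the direction that is integrated out, so the exponential term cancels \emph{identically} and the empirical process is indexed by the potentials alone, with no exponential class to cover. You would either need to adopt this device or supply a genuinely new argument for the U-process term; as written, part (ii) does not go through. A secondary issue: your class $\cF$ must contain the \emph{random} empirical optimizers, whose regularity constants depend on the (random) subgaussian proxy of $P_n,Q_n$; a ``with high probability'' inclusion is not enough for a bound in expectation, and you need moment control of that random proxy, as in Lemma~\ref{lem:proxy_moments}.
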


If we denote by $P^{\epsilon}$ and $Q^{\epsilon}$ the pushforwards of $P$ and $Q$ under the map $x \mapsto \epsilon^{-1/2} x$, then it is easy to see that
\begin{equation*}
S_\epsilon(P, Q) = \epsilon S(P^{\epsilon}, Q^{\epsilon})\,.
\end{equation*}
We immediately obtain the following corollary.
\begin{corollary}\label{cor:varying-ep}
If $P$ and $Q$ are $\sigma^2$-subgaussian, then
\begin{equation*}
E_{P,Q}|S_\epsilon(P,Q) - S_\epsilon(P_n,Q_n)| \leq K_d \cdot \epsilon \left(1 + \frac{\sigma^{\lceil 5d/2\rceil + 6}}{\epsilon^{\lceil 5d/4 \rceil + 3}} \right)\frac{1}{\sqrt n}\,.
\end{equation*}
\end{corollary}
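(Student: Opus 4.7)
The plan is to bootstrap Theorem~\ref{theo1} through the rescaling identity $S_\epsilon(P,Q) = \epsilon S(P^\epsilon, Q^\epsilon)$ stated in the excerpt just above the corollary. The first step is to observe that this identity extends verbatim to empirical measures: since the pushforward of $P_n = \frac{1}{n}\sum_i \delta_{X_i}$ under $x \mapsto \epsilon^{-1/2} x$ is $\frac{1}{n}\sum_i \delta_{\epsilon^{-1/2}X_i}$, and the rescaled samples $\epsilon^{-1/2}X_i$ are i.i.d.\ from $P^\epsilon$, this pushforward coincides in distribution with the empirical measure $(P^\epsilon)_n$. Hence $S_\epsilon(P_n, Q_n) = \epsilon S(P^\epsilon_n, Q^\epsilon_n)$ and
\[
E_{P,Q}|S_\epsilon(P,Q) - S_\epsilon(P_n, Q_n)| \;=\; \epsilon \, E\bigl| S(P^\epsilon, Q^\epsilon) - S(P^\epsilon_n, Q^\epsilon_n) \bigr|\,,
\]
reducing the corollary to an application of Theorem~\ref{theo1} at a different subgaussian scale.

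Next I would track how the subgaussian parameter transforms. If $P$ is $\sigma^2$-subgaussian and $Z = \epsilon^{-1/2} X$ for $X \sim P$, then
\[
E\, e^{\|Z\|^2/(2 d \sigma^2/\epsilon)} \;=\; E\, e^{\|X\|^2/(2 d \sigma^2)} \;\leq\; 2\,,
\]
so $P^\epsilon$ is $(\sigma^2/\epsilon)$-subgaussian, and similarly for $Q^\epsilon$. Substituting $\sigma^2/\epsilon$ for the subgaussian parameter in Theorem~\ref{theo1} and multiplying through by $\epsilon$ yields
\[
E_{P,Q}|S_\epsilon(P,Q) - S_\epsilon(P_n, Q_n)| \;\leq\; K_d \cdot \epsilon \left(1 + \frac{\sigma^{\lceil 5d/2\rceil + 6}}{\epsilon^{(\lceil 5d/2\rceil + 6)/2}}\right) \frac{1}{\sqrt n}\,.
\]

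The only remaining task is cosmetic bookkeeping to match the exponent in the stated form: since $2\lceil 5d/4 \rceil \geq \lceil 5d/2 \rceil$, one has $\lceil 5d/4 \rceil + 3 \geq (\lceil 5d/2 \rceil + 6)/2$, so in the relevant regime $\epsilon \leq 1$ the half-integer exponent may be replaced by the cleaner integer expression $\lceil 5d/4 \rceil + 3$ at the cost of a slightly looser constant. I do not anticipate any substantive obstacle here: once Theorem~\ref{theo1} is available the entire argument is a one-line rescaling supplemented by the elementary subgaussian computation above.
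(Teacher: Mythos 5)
Your proposal is correct and follows exactly the paper's route: the paper obtains the corollary ``immediately'' from the rescaling identity $S_\epsilon(P,Q)=\epsilon S(P^\epsilon,Q^\epsilon)$ together with Theorem~\ref{theo1} applied at the rescaled subgaussian parameter $\sigma^2/\epsilon$, which is precisely your argument. You even supply details the paper leaves implicit (that the pushforward of the empirical measure is an empirical measure of the pushforward, the computation showing $P^\epsilon$ is $(\sigma^2/\epsilon)$-subgaussian, and the exponent bookkeeping $\lceil 5d/4\rceil+3\geq(\lceil 5d/2\rceil+6)/2$, valid for $\epsilon\leq 1$), so no further comparison is needed.
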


If we compare Corollary~\ref{cor:varying-ep} with Theorem~\ref{thm:genevay}, we note that the polynomial prefactor in Corollary~\ref{cor:varying-ep} has higher degree than the one in Theorem~\ref{thm:genevay}, pointing to a potential weakness of our bound.
On the other hand, the exponential dependence on $D^2/\epsilon$ has completely disappeared.
Moreover, the brittle quantity $D$, finite only for compactly supported measures, has been replaced by the more flexible subgaussian variance proxy $\sigma^2$.

The improvements in Theorem \ref{theo1} are obtained via two different methods.
First, a simple argument allows us to remove the exponential term and bound the desired quantity by an empirical process, as in \cite{Genevay2018}.
Much more challenging is the extension to measures with unbounded support.
The proof technique of \cite{Genevay2018} relies on establishing uniform bounds on the derivatives of the optimal potentials, but this strategy cannot succeed if the support of $P$ and $Q$ is not compact.
We therefore employ a more careful argument based on controlling the H\"older norms of the optimal potentials on compact sets.
A chaining bound completes our proof.

In Proposition~\ref{prop:pointwise_derivative_bound} below (whose proof we defer to Appendix \ref{app:omitted}) we show that if $(f, g)$ is a pair of optimal potentials for $\sigma^2$-subgaussian distributions $P$ and $Q$, then we may control the size of $f$ and its derivatives.

\begin{proposition}\label{prop:pointwise_derivative_bound}
Let $P$ and $Q$ be $\sigma^2$-subgaussian distributions.
There exist optimal dual potentials $(f, g)$ for $P$ and $Q$ such that for any multi-index $\alpha$ with $|\alpha| = k$,
\begin{equation}\label{eq:pointwise_derivative_bound}
|D^\alpha (f - \frac 12\|\cdot\|^2)(x)| \leq C_{k, d}\left\{
\begin{array}{ll}
1 + \sigma^4  & \text{ $k = 0$} \\
 \sigma^{k} (\sigma + \sigma^2)^{k} & \text{ otherwise,} 
 \end{array}\right.
\end{equation}
if $\|x\| \leq \sqrt d \sigma$,
and
\begin{equation}\label{eq:pointwise_derivative_bound_2}
|D^\alpha (f - \frac 12\|\cdot\|^2)(x)| \leq C_{k, d}\left\{
\begin{array}{ll}
1 + (1+ \sigma^2) \|x\|^2 & \text{ $k = 0$}\\
 \sigma^{k} (\sqrt{\sigma \|x\|} + \sigma\|x\|)^{k} & \text{ otherwise,} 
 \end{array}\right.
\end{equation}
if $\|x\| > \sqrt d \sigma$, where $C_{k, d}$ is a constant depending only on $k$ and $d$.
\end{proposition}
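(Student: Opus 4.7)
My plan is to recast the optimality conditions~\eqref{eq:dual-opt} as a log-Laplace relation and to view derivatives of $f$ as cumulants of an explicit tilted measure. Write $\tilde f := f - \tfrac12\|\cdot\|^2$ and $\tilde g := g - \tfrac12\|\cdot\|^2$; then \eqref{eq:dual-opt} rearranges to
\begin{equation*}
\tilde f(x) = -\log \int e^{\tilde g(y) + x^\top y}\, \dd Q(y), \qquad \tilde g(y) = -\log \int e^{\tilde f(x) + x^\top y}\, \dd P(x),
\end{equation*}
and the optimal conditional $\pi^\star(\dd y \mid x)$ of the entropic coupling has density proportional to $e^{\tilde g(y)+x^\top y}$ with respect to $Q$. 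Hence $\tilde f(x)$ is the negative log-Laplace transform of a tilt of $Q$, and for $|\alpha|\geq 1$ one has $D^\alpha \tilde f(x) = (-1)^{|\alpha|}\kappa_\alpha(\pi^\star(\cdot\mid x))$, the joint cumulant of order $\alpha$. The transform itself satisfies the key identity
\begin{equation*}
\log E_{\pi^\star(\cdot\mid x)}\,e^{u^\top Y} \;=\; \tilde f(x) - \tilde f(x+u),
\end{equation*}
which will later turn pointwise control of $\tilde f$ into subgaussian control of $\pi^\star(\cdot\mid x)$.

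The main obstacle is obtaining the $k=0$ pointwise bounds on $\tilde f$ (and by symmetry on $\tilde g$), since the two potentials are coupled through their fixed-point equations and a naive iteration loses factors of $\sigma$ at each pass. My plan is a bootstrap. Fix the additive normalization $\int \tilde f\, \dd P + \int \tilde g\, \dd Q = 0$. Applying Jensen's inequality to the concave $-\log$ in each fixed-point equation yields the paired upper bounds
\begin{equation*}
\tilde f(x)\leq -\!\int \tilde g\,\dd Q - x^\top E_Q Y, \qquad \tilde g(y)\leq -\!\int \tilde f\,\dd P - y^\top E_P X.
\end{equation*}
For lower bounds, substitute these upper bounds back into the fixed-point equations and use the subgaussian MGF control $E_Q e^{u^\top Y}\leq e^{C\sigma^2\|u\|^2}$ provided by the definition of subgaussianity in Section~\ref{sec:background}. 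Iterating this twice produces a quadratic tail estimate $|\tilde f(x)| \leq C(1 + (1+\sigma^2)\|x\|^2)$; specializing to $\|x\|\leq \sqrt d\sigma$ collapses the quadratic factor into a constant of size $1+\sigma^4$, matching the stated $k=0$ bounds. The key accounting that I expect to take real care is tracking the ``excess'' $\sigma$ powers introduced by each application of the subgaussian MGF bound, which forces precisely two passes through the bootstrap to close.

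Once the $k=0$ bounds are available, the derivative bounds follow structurally. Insert the pointwise bounds on $\tilde f$ into the identity $\log E_{\pi^\star(\cdot\mid x)}e^{u^\top Y} = \tilde f(x) - \tilde f(x+u)$. In the bulk $\|x\| \leq \sqrt d\sigma$, the right-hand side is bounded by a quadratic in $\|u\|$ with coefficient of order $\sigma^2(\sigma+\sigma^2)^2$, so $\pi^\star(\cdot\mid x)$ is subgaussian with a variance proxy of this order. In the tail, the same substitution yields a subgaussian variance proxy of order $\sigma^2(\sqrt{\sigma\|x\|}+\sigma\|x\|)^2$, where the mixed term comes from $\|x+u\|^2 - \|x\|^2$ in the quadratic tail bound on $\tilde f$. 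A standard cumulant estimate for $\tau^2$-subgaussian measures, $|\kappa_\alpha(\mu)|\leq C_{k,d}\,\tau^k$ for $|\alpha|=k$, then gives exactly \eqref{eq:pointwise_derivative_bound}--\eqref{eq:pointwise_derivative_bound_2} after a small coordinate-wise Cauchy--Schwarz step to convert the scalar subgaussian bound into bounds on mixed partials. No delicate analysis is required beyond the bootstrap in paragraph two, so I expect the $k\geq 1$ portion to be essentially mechanical.
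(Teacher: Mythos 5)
Your setup coincides with the paper's: extend the potentials so that \eqref{eq:dual-opt} holds pointwise (this is Proposition~\ref{extend-potentials}, which you use implicitly and should cite or reprove, since a priori the optimality conditions only hold a.s.), observe that $\tilde f$ is a negative log-Laplace transform of a tilted version of $Q$, get the $k=0$ bounds by Jensen plus the subgaussian moment-generating bound, and treat higher derivatives as cumulants of the tilted measure $\pi^\star(\cdot\mid x)$. Two remarks on the $k=0$ part: the normalization $\int \tilde f\,\dd P+\int \tilde g\,\dd Q=0$ is not yours to impose --- by strong duality $\int f\,\dd P+\int g\,\dd Q=S(P,Q)$, so the sum of the tilded integrals is pinned at $S(P,Q)-\tfrac12 E_P\|X\|^2-\tfrac12 E_Q\|Y\|^2$; only the difference is free. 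A workable substitute is $E_P f=E_Q g=\tfrac12 S(P,Q)\ge 0$ (as in Proposition~\ref{extend-potentials}), after which your Jensen/bootstrap argument goes through.

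The genuine gap is in the $k\ge 1$ step. From the identity $\log E_{\pi^\star(\cdot\mid x)}e^{u^\top Y}=\tilde f(x)-\tilde f(x+u)$ and the $k=0$ bounds you only get an estimate of the form $\log E\,e^{u^\top Y}\le A_x+B\|u\|^2$ with $B\asymp 1+\sigma^2$ and a large additive constant $A_x$ (of order $1+\sigma^4$ in the bulk, $(1+\sigma^2)\|x\|^2$ in the tail). This is not ``subgaussian with variance proxy $\sigma^2(\sigma+\sigma^2)^2$'': the bound does not vanish at $u=0$, and removing $A_x$ by centering requires $u^\top\nabla\tilde f(x)$, i.e.\ exactly the $k=1$ bound you are trying to prove, so the argument is circular there (note also that the ``standard'' estimate $|\kappa_\alpha|\le C_{k,d}\tau^k$ does not cover the first cumulant, the mean of the tilted measure, which is $-\nabla\tilde f(x)$). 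What an $(A_x,B)$-type bound honestly yields is $|\kappa_\alpha|\le C_{k,d}\bigl((1+A_x)(1+B)\bigr)^{k/2}$, which reproduces the stated rates only in the large-$\sigma$ bulk regime and misses both the $\sqrt{\sigma\|x\|}$ structure in the tail and the decay as $\sigma\to 0$; pointwise bounds on $\tilde f$ alone, which contain irreducible $O(1)$ constants, cannot produce derivative bounds that vanish with $\sigma$. The paper's proof gets the fine $\sigma$- and $\|x\|$-dependence differently: it bounds the moments $\mu_\beta$ of the tilted measure directly (Lemma~\ref{lem:moment_bound}) by splitting the defining integral at a radius $\tau$ chosen as an explicit function of $\sigma$ and $\|x\|$ ($\tau^2\asymp\sigma^4+\sigma^6$ in the bulk, $\sigma^3\|x\|+\sigma^4\|x\|^2$ in the tail), using the pointwise potential bounds of Proposition~\ref{extend-potentials} to control the normalizing denominator and Cauchy--Schwarz together with the subgaussian moments of $Q$ for the far region, and then assembles $D^\alpha\tilde f$ via the Fa\'a di Bruno expansion. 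The concentration of $Q$ itself --- which never enters your $k\ge1$ argument --- is exactly what carries the powers of $\sigma$, so this integral-splitting (or some replacement that uses the subgaussianity of the base measure, not just the size of $\tilde f$) is the missing ingredient you would need to supply.
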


We denote by $\mathcal F_\sigma$ the set of functions satisfying~\eqref{eq:pointwise_derivative_bound} and~\eqref{eq:pointwise_derivative_bound_2}.
The following proposition shows that it suffices to control an empirical process indexed by this set.
\begin{proposition} \label{noexp}
Let $P$, $Q$, and $P_n$ be $\tilde \sigma^2$-subgaussian distributions, for a possibly random $\tilde \sigma \in [0, \infty)$.
Then
\label{genevayimproved}
\begin{equation} |S(P_n,Q) - S(P,Q)|\leq 2 \sup_{u \in \mathcal F_{\tilde \sigma}} |E_P u - E_{P_n} u|\,.
\end{equation}
\end{proposition}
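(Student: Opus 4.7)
The plan is to leverage the dual formulation~\eqref{eq:dual} together with the pointwise optimality conditions~\eqref{eq:dual-opt}: by pairing optimal potentials for one problem as a suboptimal test pair in the dual for the other, the difference $S(P_n,Q)-S(P,Q)$ collapses to a linear expression in $P_n-P$ against a single potential from $\mathcal F_{\tilde\sigma}$.

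First I would invoke Proposition~\ref{prop:pointwise_derivative_bound} to select optimal potentials $(f,g)$ for $(P,Q)$ with $f\in\mathcal F_{\tilde\sigma}$, and likewise optimal $(f_n,g_n)$ for $(P_n,Q)$ with $f_n\in\mathcal F_{\tilde\sigma}$. Both applications are legitimate because $P$, $Q$, and $P_n$ are all $\tilde\sigma^2$-subgaussian; the randomness of $\tilde\sigma$ is handled by conditioning on the sample, after which the argument is entirely deterministic. The crucial feature of these potentials is that they are Schr\"odinger representatives, i.e.\ they satisfy
\begin{equation*}
f(x) = -\log\int e^{g(y)-\frac12\|x-y\|^2}\,\dd Q(y),
\end{equation*}
so the first identity in~\eqref{eq:dual-opt} holds \emph{pointwise} in $x$ (not only $P$-a.s.), and analogously for $(f_n,g_n)$ with $dP_n$ in place of $dP$.

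Next, integrating the pointwise identity against any probability measure in $x$ still yields $1$. Substituting $(f,g)$ into~\eqref{eq:dual} for $(P_n,Q)$ therefore gives $S(P_n,Q)\ge \int f\,\dd P_n+\int g\,\dd Q$, while the same substitution into the dual for $(P,Q)$ attains equality $S(P,Q)=\int f\,\dd P+\int g\,\dd Q$. Subtracting,
\begin{equation*}
S(P_n,Q) - S(P,Q) \;\ge\; \int f\,\dd(P_n-P).
\end{equation*}
The symmetric argument, substituting $(f_n,g_n)$ and exploiting its pointwise Schr\"odinger identity against $dP$, delivers
\begin{equation*}
S(P_n,Q) - S(P,Q) \;\le\; \int f_n\,\dd(P_n-P).
\end{equation*}
Combining these one-sided inequalities via the triangle inequality and using $f,f_n\in\mathcal F_{\tilde\sigma}$ yields
\begin{equation*}
|S(P_n,Q)-S(P,Q)| \;\le\; \Bigl|\textstyle\int f\,\dd(P_n-P)\Bigr| + \Bigl|\textstyle\int f_n\,\dd(P_n-P)\Bigr| \;\le\; 2\sup_{u\in\mathcal F_{\tilde\sigma}}|E_P u - E_{P_n} u|,
\end{equation*}
with the factor $2$ arising precisely from summing the two one-sided bounds.

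The main subtlety is upgrading the marginal constraint~\eqref{eq:dual-opt} from ``$P$-a.s.''\ to genuinely pointwise in $x$, since the sample points $X_i$ can in principle fall in any $P$-null exceptional set. This is resolved by taking $f$ to be the canonical Schr\"odinger extension $f(x)=-\log\int e^{g(y)-\frac12\|x-y\|^2}\dd Q(y)$, which is the representative supplied by Proposition~\ref{prop:pointwise_derivative_bound} and is defined (and satisfies the bounds of $\mathcal F_{\tilde\sigma}$) on all of $\RR^d$. The standard gauge freedom $(f,g)\mapsto(f+c,g-c)$ is harmless since constants drop out of $\int f\,\dd(P_n-P)$ and the bounds defining $\mathcal F_{\tilde\sigma}$ apply to $f-\frac12\|\cdot\|^2$, which is unaffected by the construction.
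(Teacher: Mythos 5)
Your proof is correct and follows essentially the same route as the paper's: the paper's chain of inequalities for the dual functional $\FA^{\alpha,\beta}$ is exactly your device of using each pair of pointwise-Schr\"odinger optimal potentials (from Proposition~\ref{extend-potentials}, with membership in $\mathcal F_{\tilde\sigma}$ via Proposition~\ref{prop:pointwise_derivative_bound}) as a test pair in the other dual problem, with the exponential term cancelling because the marginal condition holds for all $x$. The resulting sandwich and the factor $2$ from summing the two one-sided bounds match the paper's argument.
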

\begin{proof}
We define the operator $\mathcal{A}^{\alpha,\beta}(u,v)$ for the pair of probability measures $(\alpha,\beta)$ and functions $(u,v)\in L_1(\alpha)\otimes L_1(\beta)$ as:
$$\FA^{\alpha,\beta}(u,v) = \int u(x)\,\dd\alpha(x)+\int v(y)\,\dd\beta(y) -\int e^{u(x)+v(y)-\frac{1}{2}||x-y||^2}\,\dd\alpha(x)d\beta(y) + 1\,.$$
Denote by $(f_n,g_n)$ a pair of optimal potentials for $(P_n, Q)$ and $(f,g)$ for $(P,Q)$, respectively.
By Proposition~\ref{extend-potentials} in Appendix \ref{app:omitted}, we can choose smooth optimal potentials $(f, g)$ and $(f_n, g_n)$ so that the condition~\eqref{eq:dual-opt} holds for all $x,y \in \RR^d$.
Proposition~\ref{prop:pointwise_derivative_bound} shows that $f, f_n \in \mathcal F_{\tilde \sigma}$.

Strong duality implies that $S(P, Q) = \FA^{P,Q}(f,g)$ and $S(P_n, Q) = \FA^{P_n,Q}(f_n,g_n)$.
Moreover, by the optimality of $(f, g)$ and $(f_n, g_n)$ for their respective dual problems, we obtain
\begin{equation*}
\FA^{P,Q}(f_n,g_n)-\FA^{P_n,Q}(f_n,g_n) \leq \FA^{P,Q}(f,g)-\FA^{P_n,Q}(f_n,g_n) \leq \FA^{P,Q}(f,g)-\FA^{P_n,Q}(f,g)\,.
\end{equation*}
We conclude that
\begin{align*}
|S(P,Q) - S(P_n,Q)| & = |\FA^{P,Q}(f,g)-\FA^{P_n,Q}(f_n,g_n)| \\
& \leq  |\FA^{P,Q}(f,g)-\FA^{P_n,Q}(f,g)| + |\FA^{P,Q}(f_n,g_n)-\FA^{P_n,Q}(f_n,g_n)|\,.
\end{align*}
It therefore suffices to bound the differences $|\FA^{P,Q}(f,g)-\FA^{P_n,Q}(f,g)|$ and $|\FA^{P,Q}(f_n,g_n)-\FA^{P_n,Q}(f_n,g_n)|$.

Upon defining $h(x) :=  \int e^{g(y)-\frac{1}{2}||x-y||^2}\dd Q(y)$ we have
\begin{multline*}
\FA^{P,Q}(f,g)-\FA^{P_n,Q}(f,g) = \Big(\int f(x) (\dd P(x) - \dd P_n(x))\Big) \\ +  \Big(\int e^{f(x)} h(x) (\dd P(x) - \dd P_n(x)) \Big)\,.
\end{multline*}
Since $(f, g)$ satisfy $e^{f(x)}h(x) = 1$ for all $x \in \RR^d$, the second term above vanishes.
Therefore
\begin{equation*}
\begin{split}
|\FA^{P,Q}(f,g)-\FA^{P_n,Q}(f,g)| & = \Big|\int f(x) (\dd P(x) - \dd P_n(x))\Big| \\
& \leq \sup_{u \in \mathcal F_{\tilde \sigma}} \Big|\int u(x) (\dd P(x) - \dd P_n(x))\Big|\,.
\end{split}
\end{equation*}
Analogously,
\begin{equation*}
|\FA^{P,Q}(f_n,g_n)-\FA^{P_n,Q}(f_n,g_n)| \leq \sup_{u \in \mathcal F_{\tilde \sigma}} \Big|\int u(x) (\dd P(x) - \dd P_n(x))\Big|\,.
\end{equation*}
This proves the claim.
\end{proof}
Proposition~\ref{noexp} can be extended to apply to simultaneously varying $P_n$ and $Q_n$.
\begin{corollary}\label{cor:two_sample}
Let $P$, $Q$, $P_n$, and $Q_n$ be $\tilde \sigma^2$-subgaussian distributions, where $\tilde \sigma \in [0, \infty)$ is possibly random.
Then
\begin{multline*}
|S(P_n, Q_n) - S(P, Q)| \lesssim \sup_{u \in \mathcal F_{\tilde \sigma}} \Big|\int u(x) (\dd P(x) - \dd P_n(x))\Big| \\ + \sup_{v \in \mathcal F_{\tilde \sigma}} \Big|\int u(x) (\dd Q(x) - \dd Q_n(x))\Big|
\end{multline*}
almost surely.
\end{corollary}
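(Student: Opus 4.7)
The plan is to reduce the two-sample bound to two separate one-sample bounds, each handled by Proposition~\ref{noexp}. Start with the triangle inequality
\begin{equation*}
|S(P_n, Q_n) - S(P, Q)| \leq |S(P_n, Q) - S(P, Q)| + |S(P_n, Q_n) - S(P_n, Q)|\,.
\end{equation*}
The first term is exactly of the form controlled by Proposition~\ref{noexp}. Since $P$, $Q$, and $P_n$ are $\tilde\sigma^2$-subgaussian by hypothesis, the proposition applies directly and yields
\begin{equation*}
|S(P_n, Q) - S(P, Q)| \leq 2\sup_{u \in \mathcal F_{\tilde \sigma}} \Big|\int u(x)\,(\dd P(x) - \dd P_n(x))\Big|\,.
\end{equation*}

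For the second term, I would invoke the symmetry of the entropic cost, namely $S(\alpha, \beta) = S(\beta, \alpha)$, which is immediate from~\eqref{eq:sink} since the squared Euclidean cost is symmetric and the relative entropy term is unchanged upon swapping marginals. Thus
\begin{equation*}
|S(P_n, Q_n) - S(P_n, Q)| = |S(Q_n, P_n) - S(Q, P_n)|\,,
\end{equation*}
which is again of the form handled by Proposition~\ref{noexp}, this time applied to the triple $(Q, P_n, Q_n)$ playing the roles of $(P, Q, P_n)$ in the statement of the proposition. The subgaussian hypothesis for $Q$, $P_n$, and $Q_n$ is supplied by the assumptions of the corollary, so we obtain
\begin{equation*}
|S(Q_n, P_n) - S(Q, P_n)| \leq 2\sup_{v \in \mathcal F_{\tilde \sigma}} \Big|\int v(x)\,(\dd Q(x) - \dd Q_n(x))\Big|\,.
\end{equation*}
Combining the two displays gives the claim with implicit constant $2$.

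There is no real obstacle here: the corollary is essentially a symmetric extension of Proposition~\ref{noexp}, and the only point requiring care is verifying that the subgaussian hypothesis is satisfied for the measure playing the role of $P_n$ in each application of the proposition, which holds because the corollary's assumptions impose $\tilde\sigma^2$-subgaussianity on all four measures simultaneously.
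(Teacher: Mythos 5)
Your proof is correct and takes essentially the same route as the paper: a triangle inequality through the intermediate quantity $S(P_n, Q)$ followed by two applications of Proposition~\ref{noexp}. The only difference is that you make explicit the symmetry $S(\alpha,\beta)=S(\beta,\alpha)$ needed to put the second term into the form covered by the proposition, a detail the paper's proof leaves implicit.
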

\begin{proof}
By the triangle inequality,
\begin{equation}\label{eq:two-to-one}
|S(P_n, Q_n) - S(P, Q)| \leq |S(P, Q) - S(P_n, Q)| + |S(P_n, Q) - S(P_n, Q_n)|\,.
\end{equation}
Since $P$, $Q$, $P_n$, and $Q_n$ are all $\tilde \sigma^2$-subgaussian, Proposition~\ref{noexp} can be applied to both terms.
\end{proof}

The majority of our work goes into bounding the resulting empirical process.
Let $s \geq 2$.
Fix a constant $C_{s, d}$ and denote by $\mathcal{F}^s$ the set of functions satisfying
\begin{align}\label{eq:fs}
|f(x)| & \leq C_{s, d} (1 + \|x\|^2) \\
|D^\alpha f (x)| & \leq C_{s, d} (1+\|x\|^s) \quad \quad \forall \alpha: |\alpha| \leq s\,.
\end{align}
Proposition~\ref{prop:pointwise_derivative_bound} establishes that if $C_{s, d}$ is large enough, then $\frac{1}{1 + \sigma^{3s}} f \in \mathcal F^s$ for all $f \in \mathcal F_{\sigma}$.

The key result is the following covering number bound. Denote by $N(\varepsilon, \mathcal F^s, L_2(P_n))$ the covering number with respect to the (random) metric $L_2(P_n)$ defined by $\|f\|_{L_2(P_n)} = \left(\frac 1n \sum_{i=1}^n f(X_i)^2\right)^{1/2}$.

\begin{proposition}\label{covering}
Let $s = \lceil d/2 \rceil + 1$.
If $P$ is $\sigma^2$-subgaussian and $P_n$ is an empirical distribution, then there exists a random variable $L$ depending on the sample $X_1, \dots, X_n$ satisfying $E L \leq 2$ such that
\begin{equation*}
\log N(\varepsilon, \mathcal F^s, L_2(P_n)) \leq C_{d} L^{d/2s} \varepsilon^{-d/s} (1+\sigma^{2d})\,,
\end{equation*}
and
\begin{equation*}
\max_{f \in \mathcal F^s} \|f\|^2_{L_2(P_n)} \leq C_d (1+L \sigma^4)\,.
\end{equation*}
\end{proposition}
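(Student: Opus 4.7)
The plan is to establish the two bounds separately. The sup-norm bound is elementary, while the covering number bound requires a dyadic peeling argument.

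For the sup-norm bound, every $f \in \mathcal{F}^s$ satisfies the pointwise inequality $|f(x)|^2 \leq 2 C_{s,d}^2(1 + \|x\|^4)$, so
$$\max_{f \in \mathcal{F}^s}\|f\|_{L_2(P_n)}^2 \leq 2 C_{s,d}^2\Big(1 + \tfrac{1}{n}\sum_i \|X_i\|^4\Big)\,.$$
Defining $L$ so this empirical fourth moment is dominated by $L\sigma^4$ up to constants, and using the subgaussian moment bound $E\|X\|^p \leq C_{d,p}\sigma^p$ (valid for every $p \geq 1$), one arranges $E L \leq 2$.

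For the covering number bound, the strategy is a dyadic annular decomposition: write $\RR^d = \bigsqcup_k A_k$ with $A_0 = B_1$ and $A_k = \{2^{k-1} \leq \|x\| < 2^k\}$ for $k \geq 1$. On each $A_k$, the restricted class $\mathcal{F}^s|_{A_k}$ lies inside a Hölder ball of radius $\lesssim 2^{sk}$ on a domain of diameter $\lesssim 2^k$. Standard Kolmogorov--Tikhomirov entropy estimates (after rescaling to the unit ball) yield
$$\log N(\eta_k, \mathcal{F}^s|_{A_k}, L_\infty(A_k)) \leq C_d\, 2^{2dk}\,\eta_k^{-d/s}\,.$$
Gluing these local covers produces a global cover whose $L_2(P_n)$-error is at most $\sum_k p_k\eta_k^2$ (with $p_k := n^{-1}|\{i : X_i \in A_k\}|$) and whose log-cardinality is at most $\sum_k C_d\, 2^{2dk}\eta_k^{-d/s}$.

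The crux is the choice $\eta_k := \eta \cdot 2^{\gamma k}$ with $\gamma$ slightly greater than $2s$, so the cardinality series $\eta^{-d/s}\sum_k 2^{dk(2-\gamma/s)}$ becomes a convergent geometric sum bounded by $C_d \eta^{-d/s}$. The error then simplifies to $\eta^2\sum_k p_k 2^{2\gamma k}$, and the elementary observation $2^{2\gamma k} \lesssim \|X_i\|^{2\gamma}$ for $X_i \in A_k$ bounds it by $C_\gamma \eta^2 n^{-1}\sum_i \|X_i\|^{2\gamma}$. Redefining $L$ so this empirical $(2\gamma)$-th moment is at most $L\sigma^{2\gamma}$ (still with $E L \leq 2$ by subgaussianity), setting the error equal to $\varepsilon^2$, and solving for $\eta$ yields the bound $C_d L^{d/(2s)}\sigma^{\gamma d/s}\varepsilon^{-d/s}$. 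Taking $\gamma$ marginally above $2s$ produces the target $\sigma^{2d}$ dependence (the excess being absorbed into $C_d$ and the factor $1+\sigma^{2d}$).

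The main obstacle is this delicate balance: $\gamma$ must strictly exceed $2s$ for the cover sum to converge, yet any excess inflates the $\sigma$-exponent. A second subtlety is constructing a \emph{single} random variable $L$ that simultaneously controls the empirical fourth moment (for the sup-norm claim) and the empirical $(2\gamma)$-th moment (for the covering step), while retaining $E L \leq 2$; this is handled by defining $L$ as a normalized empirical expectation of one dominating polynomial in $\|X\|/\sigma$ and invoking the full range of subgaussian moment inequalities.
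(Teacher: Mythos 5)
Your second claim (the bound on $\max_{f\in\mathcal{F}^s}\|f\|_{L_2(P_n)}^2$) and the overall architecture — dyadic spatial decomposition, Kolmogorov--Tikhomirov entropy for H\"older balls on each piece, gluing with weights given by the empirical masses $p_k$ — are sound, and in spirit this is a hand-rolled version of what the paper gets by citing van der Vaart--Wellner, Corollary 2.7.4. But the covering bound as you finish it has a genuine gap at the very last step. With $\eta_k=\eta\,2^{\gamma k}$ you need $\gamma>2s$ strictly for the cardinality series $\sum_k 2^{dk(2-\gamma/s)}$ to converge, and then solving $\varepsilon^2\asymp \eta^2(1+L\sigma^{2\gamma})$ gives $\log N \lesssim L^{d/2s}\varepsilon^{-d/s}\,(1+\sigma^{\gamma d/s})$ with $\gamma d/s>2d$ strictly. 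The excess factor $\sigma^{(\gamma/s-2)d}$ cannot be ``absorbed into $C_d$ and $1+\sigma^{2d}$'': $\sigma$ is a free parameter, so for large $\sigma$ your bound is strictly weaker than the stated one, and no choice of ``$\gamma$ marginally above $2s$'' removes the gap (it only makes the unabsorbable exponent small, at the price of a blowing-up constant). Taking $\gamma=2s$ exactly is not an out either: the cardinality sum then diverges (or, if you restrict to annuli actually containing sample points, you pick up an extra random factor of order the number of occupied shells, again spoiling the clean statement).

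The missing idea is that polynomial moment control of the shell masses is too lossy; you must use the subgaussian \emph{exponential} decay of the empirical masses. The paper does this by setting $L=\frac1n\sum_i e^{\|X_i\|^2/2d\sigma^2}$ (so $EL\le 2$), partitioning into annuli scaled by $\sigma$, and bounding $P_n(B_j)\le L e^{-2^{2j-3}}$ by Markov; feeding the shell volumes $\asymp \sigma^d 2^{dj}$, H\"older norms $\asymp 2^{js}(1+\sigma^s)$, and these exponentially small masses into the weighted combination of Corollary 2.7.4 makes the $j$-sum converge to a constant independent of $\sigma$ and $L$, yielding exactly $(1+\sigma^{2d})$. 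Equivalently, within your framework, instead of a fixed geometric schedule you should optimize the accuracies shell by shell, $\eta_k\propto(2^{2dk}/p_k)^{s/(d+2s)}$, which gives total log-cardinality of order $\varepsilon^{-d/s}\bigl(\sum_k 2^{\frac{4dsk}{d+2s}}p_k^{\frac{d}{d+2s}}\bigr)^{\frac{d+2s}{2s}}$; with the exponential bound on $p_k$ this sum is $O(L^{\frac{d}{d+2s}}(1+\sigma^{\frac{4ds}{d+2s}}))$ and the outer power gives precisely $L^{d/2s}(1+\sigma^{2d})$. As written, your argument proves only the weaker bound with exponent $2d+\delta$, which would propagate to a slightly worse power of $\sigma$ in Theorem~\ref{theo1} and does not establish the proposition as stated.
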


\begin{proof}
We use the symbol $C$, decorated with subscripts, to indicate constants whose value may change from line to line.
We apply \citet[Corollary 2.7.4]{VaaWel96}.
Denote by $L$ the quantity $\frac 1n \sum_{i=1}^n e^{\|x_i\|^2/2 d \sigma^2}$.
The subgaussianity of $P$ implies that $E L \leq 2$.
We partition $\RR^d$ into sets $B_j$ defined by $B_0 = [- \sigma, \sigma]^d$ and $B_j = [- 2^j \sigma, 2^j \sigma] \setminus [- 2^{j-1} \sigma, 2^{j-1} \sigma]$.
Note that for each $j$, the Lebesgue measure of $\{x : \mathrm{d}(x, B_j)\leq 1\}$ is bounded by $C_d(1 + \sigma^d 2^{dj})$.
Moreover, by Markov's inequality, the mass that $P_n$ assigns to each $B_j$ is at most $L e^{-2^{2j-3}}$.
Finally, by definition of the class $\mathcal{F}^s$, the functions in $\mathcal F^s$ have $\mathcal C^s(B_0)$ norm at most $C_{s,d} (1 + \sigma^{s})$, and on $B_j$ for $j \geq 1$ have $\mathcal C^s(B_j)$ norm at most $C_{s,d} 2^{js}(1 + \sigma^{s})$, where $\mathcal C^s(\Omega)$ represents the H\"older space on $\Omega$ of smoothness $s$.

Applying \citet[Corollary 2.7.4]{VaaWel96} with $V = d/s$ and $r = 2$ yields
\begin{align*}
\log N(\varepsilon, \mathcal F^s, L_2(P_n)) & \leq C_d \varepsilon^{-d/s} L^{d/2s} \left(\sum_{j \geq 0} (1+ \sigma^d 2^{dj})^{\frac{2s}{d+2s}}2^{\frac{2djs}{d+2s}} (1+\sigma^{s})^{\frac{2d}{d+2s}} e^{-  \frac{d2^{2j-3}}{d+2s}}\right)^{\frac{d+2s}{2s}} \\
& \leq  C_d \varepsilon^{-d/s} L^{d/2s} (1 + \sigma^{2d}) \left(\sum_{j \geq 0} 2^{\frac{4djs}{d+2s}} e^{-  \frac{d2^{2j-3}}{d+2s}} \right)^{\frac{d+2s}{2s}} \\
& \leq C_d \varepsilon^{-d/s} L^{d/2s}(1 + \sigma^{2d})\,,
\end{align*}
where the final step follows because the series is summable with value independent of $\sigma$ and $L$.

To show the second claim, we note that $E_{P_n} \|X\|^4 \leq C_d L \sigma^4$ by the same argument used to bound the moments of $P$ in Lemma~\ref{lem:subg_moments} in Appendix \ref{app:technical}.
The definition of the class $\mathcal F^s$ implies
\begin{equation*}
\max_{f \in \mathcal F^s} \|f\|^2_{L_2(P_n)} = \max_{f \in \mathcal F^s} E_{P_n} |f(X)|^2 \leq C_{d} E_{P_n} (1 + \|X\|^4) \leq C_d (1 + L \sigma^4)\,.
\end{equation*}
\end{proof}

We can now prove Theorem~\ref{theo1}.
\begin{proof}[Proof of Theorem~\ref{theo1}]
Let $\tilde \sigma$ be the infimum over all $\tau > 0$ such that $P$, $Q$, $P_n$, and $Q_n$ are all $\tau^2$-subgaussian.
By Lemma~\ref{lem:uniform_subg}, $\tilde \sigma$ is finite almost surely.

By Corollary~\ref{cor:two_sample},
\begin{align*}
E_{P, Q} |S(P, Q) - S(P_n, Q_n)|  & \lesssim E \sup_{u \in \mathcal F_{\tilde \sigma}} \Big|\int u(x) (\dd P(x) - \dd P_n(x))\Big| \\ & \phantom{\lesssim}+ E \sup_{v \in \mathcal F_{\tilde \sigma}} \Big|\int u(x) (\dd Q(x) - \dd Q_n(x))\Big|\,.
\end{align*}
We will show how to bound the first term, and the second will follow in exactly the same way.

For any set of functions $\mathcal F$, we write $\|P - P_n\|_{\mathcal F} = \sup_{u \in \mathcal F} (\int u(x) (\dd P(x) - \dd P_n(x)))$.
Recall that, for $s = \lceil d/2 \rceil + 1$, if $u \in \mathcal F_{\tilde \sigma}$ then $\frac{1}{1 + \tilde \sigma^{3s}} u \in \mathcal F^s$.
Therefore
\begin{align*}
E\|P - P_n\|_{\mathcal F_\sigma} & \leq E (1 + \tilde \sigma^{3s}) \|P - P_n\|_{\mathcal F^s} \\
& \leq (E(1 + \tilde \sigma^{3s})^2)^{1/2} (E \|P - P_n\|_{\mathcal F^s}^2)^{1/2}\,.
\end{align*}

Then by \citet[Theorem 3.5.1 and Exercise 2.3.1]{GinNic16}, we have
\begin{align*}
E \|P - P_n\|_{\mathcal F^s}^2 & \lesssim \frac 1 n E \left(\int_0^{\sqrt{\max_{f \in \mathcal F^s} \|f\|^2_{L_2(P_n)}}} \sqrt{\log 2 N(\tau, \mathcal F^s, L_2(P_n))} \, \dd \tau\right)^2 \\
& \leq C_d \frac 1 n E\left(\int_0^{C_d\sqrt{ (1+L \sigma^4)}} \sqrt{1 + L^{d/2s} \tau^{-d/s} (1+\sigma^{2d})} \, \dd \tau\right)^2 \\
& \leq C_d \frac 1 n (1 + \sigma^{2d}) E\left(\int_0^{C_d\sqrt{(1+L\sigma^4)}} L^{d/4s} \tau^{-d/2s} \, \dd \tau \right)^2 \\
& \leq C_d \frac 1 n (1 + \sigma^{2d}) E\left[(1+L\sigma^4)^{1- d/2s}\right]\,,
\end{align*}
where in the last step we have used that $d/2s < 1$ so that $\tau^{-d/2s}$ is integrable in a neighborhood of the origin.
Applying the bound on $E L$ yields that this expression is bounded by $C_d (1 + \sigma^{2d + 4})\frac 1 {n}$.

Lemma~\ref{lem:proxy_moments} in Appendix~\ref{app:technical} shows that $E \tilde \sigma^{2k} \leq C_k \sigma^{2k}$ for all positive integers $k$.
Combining these bounds yields
\begin{equation*}
E\|P - P_n\|_{\mathcal F_\sigma} \leq C_d (1 + \sigma^{3s})(1+\sigma^{d + 2}) \frac 1 {\sqrt n}\,,
\end{equation*}
as desired.
\end{proof}

\section{A central limit theorem for entropic OT}\label{sec:CLT}
The results of Section~\ref{sec:sample} show that, for general subgaussian measures, the empirical quantity $S(P_n, Q_n)$ converges to $S(P, Q)$ in expectation at the parametric rate.
However, in order to use entropic OT for rigorous statistical inference tasks, much finer control over the deviations of $S(P_n, Q_n)$ is needed, for instance in the form of asymptotic distributional limits.
In this section, we accomplish this goal by showing a central limit theorem (CLT) for $S(P_n, Q_n)$, valid for any subgaussian measures.

\citet{BigCazPap17} and \citet{KlaTamMun18} have shown CLTs for entropic OT when the measures lie in a \emph{finite} metric space (or, equivalently, when $P$ and $Q$ are finitely supported).
Apart from being restrictive in practice, these results do not shed much light on the general situation because OT on finite metric spaces behaves quite differently from OT on $\RR^d$.\footnote{A thorough discussion of the behavior of unregularized OT for finitely supported measures can be found in \citet{SomMun18} and \citet{WeeBac18}.}
Very recently, distributional limits for general measures possessing $4 + \delta$ moments have been obtained for unregularized OT by \citet{Del2019}.
Our proof follows their approach.

We prove the following.
\begin{theorem}\label{theo:clt}
Let $X_1, \dots X_n \sim P$ be an i.i.d.\ sequence, and denote by $P_n$ the corresponding empirical measure.
If $P$ is subgaussian, then 
\begin{equation}
\label{eq:cltone} \sqrt{n}\left(S(P_n,Q)-E(S(P_n, Q)\right) \overset{\mathcal{D}}{\rightarrow} \mathcal{N}\left(0, \var_P(f(X))\right),
\end{equation} 
and
\begin{equation}
\label{eq:varone} \lim_{n\rightarrow\infty} n \var(S(P_n,Q)) = \var_P(f(X))\,.
\end{equation}

Likewise, let $X_1,\ldots, X_n\sim P$ and $Y_1,\sim Y_m\sim Q$ are two i.i.d.\ sequences independent of each other. Assume $P$ and $Q$ are both subgaussian. Denote $\lambda:= \lim_{m,n\rightarrow\infty} \frac{n}{m+n}\in (0,1)$.

Then
\begin{equation}
\label{eq:clttwo}  \sqrt{\frac{mn}{m+n}}\left(S(P_n,Q_m)-E(S(P_n, Q_m)\right) \overset{\mathcal{D}}{\rightarrow} \mathcal{N}\left(0, (1-\lambda)\var_P(f(X_1)) + \lambda \var_Q(g(Y_1))\right),
\end{equation} 
and
\begin{equation}
\label{eq:vartwo} 
\lim_{m,n\rightarrow\infty}\frac{mn}{m+n} \var(S(P_n,Q_m)) = (1-\lambda)\var_P(f(X)) + \lambda \var_Q(g(Y)).
\end{equation}
\end{theorem}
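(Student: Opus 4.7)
\textbf{Proof plan for Theorem~\ref{theo:clt}.}

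The overall strategy is to linearize $S(P_n,Q)$ around its population counterpart using the dual formulation and then invoke the classical CLT on the resulting linear statistic. Concretely, let $(f,g)$ be the optimal potentials for $(P,Q)$ and $(f_n,g_n)$ those for $(P_n,Q)$, each extended via Proposition~\ref{extend-potentials} so the Schrödinger system~\eqref{eq:dual-opt} holds pointwise on $\RR^d$. Using strong duality as in the proof of Proposition~\ref{noexp}, the sub-optimality inequalities
\begin{equation*}
\FA^{P_n,Q}(f,g)-\FA^{P,Q}(f,g)\leq S(P_n,Q)-S(P,Q)\leq \FA^{P_n,Q}(f_n,g_n)-\FA^{P,Q}(f_n,g_n)
\end{equation*}
combined with the pointwise identity $e^{f(x)}\int e^{g(y)-\frac12\|x-y\|^2}\dd Q(y)=1$ (and likewise for $(f_n,g_n)$) produce the sandwich
\begin{equation*}
\int f\,\dd(P_n-P)\ \leq\ S(P_n,Q)-S(P,Q)\ \leq\ \int f_n\,\dd(P_n-P).
\end{equation*}

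The classical Lindeberg CLT applied to the i.i.d.\ random variables $f(X_i)$, which have finite variance by Proposition~\ref{prop:pointwise_derivative_bound} and the subgaussianity of $P$, yields $\sqrt{n}\int f\,\dd(P_n-P)\Rightarrow \mathcal N(0,\var_P f(X))$. To conclude the one-sample CLT it therefore suffices to show that the sandwich is tight, i.e.
\begin{equation*}
\sqrt{n}\int (f_n-f)\,\dd(P_n-P)\ \xrightarrow{P}\ 0.
\end{equation*}
This is the main technical obstacle. I would prove it by combining two ingredients: a stability result that gives $\|f_n-f\|_{L_2(P)}\to 0$ in probability (exploiting the uniform bounds of Proposition~\ref{prop:pointwise_derivative_bound}, which place $f$ and $f_n$ in a common class $\cF_{\tilde\sigma}$ almost surely via Lemma~\ref{lem:uniform_subg}, together with uniqueness up to additive constants of solutions to the Schrödinger system), and an asymptotic equicontinuity statement for the empirical process indexed by $\cF^s$. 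The latter follows from the covering-number estimate of Proposition~\ref{covering}: $\cF^s$ is $P$-Donsker, so $\mathbb G_n$ is stochastically equicontinuous with respect to the $L_2(P)$ pseudometric, and stochastic equicontinuity applied to the sequence $f_n-f\in\cF^s$ (after the harmless rescaling $(1+\tilde\sigma^{3s})^{-1}$) gives the desired $o_P(1)$ bound. Passing to $S(P_n,Q)-ES(P_n,Q)$ instead of $S(P_n,Q)-S(P,Q)$ requires $\sqrt{n}(ES(P_n,Q)-S(P,Q))\to 0$, which I would obtain from uniform integrability of $\sqrt n(S(P_n,Q)-S(P,Q))$: the moment estimates underlying Theorem~\ref{theo1} (in particular the $L_2$ bound on $\|P-P_n\|_{\cF^s}$) bound $E[n(S(P_n,Q)-S(P,Q))^2]$ uniformly in $n$, which together with weak convergence yields convergence of first and second moments, establishing both~\eqref{eq:cltone} and~\eqref{eq:varone}.

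For the two-sample statement, the plan is to linearize jointly. Using independence of the $X$- and $Y$-samples and applying the one-sample sandwich twice (once holding the $X$-marginal fixed at $P$ and letting the $Y$-marginal vary, and once the other way around), I would show
\begin{equation*}
S(P_n,Q_m)-S(P,Q)=\int f\,\dd(P_n-P)+\int g\,\dd(Q_m-Q)+r_{n,m},
\end{equation*}
with $\sqrt{mn/(m+n)}\,r_{n,m}\to 0$ in probability. The two linear terms are independent because the samples are, so under the regime $n/(m+n)\to\lambda$ the classical CLT applied to each, combined via Slutsky, gives the Gaussian limit with variance $(1-\lambda)\var_P f(X_1)+\lambda\var_Q g(Y_1)$. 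The variance identity~\eqref{eq:vartwo} again follows by uniform integrability, using Corollary~\ref{cor:two_sample} to bound the second moment of $\sqrt{mn/(m+n)}(S(P_n,Q_m)-ES(P_n,Q_m))$ uniformly in $n,m$. The most delicate step throughout is the joint equicontinuity argument for the two-sample remainder, since here both potentials vary simultaneously with the random marginals; I expect this to be the dominant technical difficulty and would handle it by conditioning to reduce to two successive one-sample applications of the Donsker-based argument sketched above.
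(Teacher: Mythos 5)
Your route is genuinely different from the paper's. Your sandwich $\int f\,\dd(P_n-P)\le S(P_n,Q)-S(P,Q)\le \int f_n\,\dd(P_n-P)$ is correct (it follows by the same cancellation of the exponential term used in Proposition~\ref{noexp}, once the potentials are extended as in Proposition~\ref{extend-potentials}; additive normalizing constants are harmless since $\int c\,\dd(P_n-P)=0$), and you then propose to close the gap $\sqrt n\int(f_n-f)\,\dd(P_n-P)$ by an asymptotic-equicontinuity/Donsker argument together with $\|f_n-f\|_{L_2(P)}\to 0$. The paper does something else: it centers at $E S(P_n,Q)$ from the outset, sets $R_n=S(P_n,Q)-\int f\,\dd P_n$, and shows $n\var(R_n)\to 0$ via the Efron--Stein inequality (Proposition~\ref{prop:bound}), with Proposition~\ref{convergence} (uniform convergence of the empirical potentials on compacts) supplying the almost-sure convergence that drives that bound; both \eqref{eq:cltone} and \eqref{eq:varone} then follow immediately from the classical CLT for $\int f\,\dd P_n$, with no control of the bias $ES(P_n,Q)-S(P,Q)$ ever needed. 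Your approach, if completed, proves the stronger statement of a CLT centered at the population value; the paper's choice of centering is precisely what lets it avoid that question.

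Two steps of your sketch are genuine gaps rather than routine details. First, the recentering and the variance claims: to pass from centering at $S(P,Q)$ to $ES(P_n,Q)$ you need uniform integrability of $\sqrt n\,(S(P_n,Q)-S(P,Q))$, and for \eqref{eq:varone} and \eqref{eq:vartwo} uniform integrability of its square, i.e.\ a uniformly bounded moment of order strictly greater than $2$. The estimate you cite from Theorem~\ref{theo1} is $E\|P-P_n\|_{\mathcal F^s}^2\lesssim 1/n$ together with $|S(P_n,Q)-S(P,Q)|\lesssim(1+\tilde\sigma^{3s})\|P-P_n\|_{\mathcal F^s}$; since $\tilde\sigma$ and $\|P-P_n\|_{\mathcal F^s}$ are dependent, even the claimed second-moment bound on $\sqrt n(S(P_n,Q)-S(P,Q))$ needs fourth moments of the empirical-process supremum, and the variance statements need moments of still higher order (or a concentration argument); none of this is in the paper and you would have to redo the chaining at higher order to get it. Second, asymptotic equicontinuity requires $\mathcal F^s$ to be $P$-Donsker; Proposition~\ref{covering} only bounds covering numbers in the random metric $L_2(P_n)$, which suffices for expected suprema but not directly for equicontinuity, so you must prove the analogous $L_2(P)$ bracketing (or uniform-entropy) bound — plausible, since $s=\lceil d/2\rceil+1>d/2$, but an additional argument. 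The stability $\|f_n-f\|_{L_2(P)}\to 0$ you can indeed extract from Proposition~\ref{convergence} plus the quadratic growth bounds of Proposition~\ref{extend-potentials} and dominated convergence. Your two-sample plan (conditioning and two one-sample applications) is consistent in spirit with the paper, which handles that case by adapting the argument of Del Barrio and Loubes.
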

The proof is deeply inspired by the method developed in \cite{Del2019} for the squared Wasserstein distance, and we roughly follow the same strategy.
\begin{proof}
The proof, in the one-sample case, proceeds as follows:
\begin{itemize}
\item[(a)] In Proposition~\ref{convergence} we show the optimal potentials for $(P_n,Q)$ convergence to optimal potentials for $(P, Q)$ uniformly on compact sets. 
\item[(b)] Letting $R_n := S(P_n, Q) - \int f(x)\dd P_n(x)$, we show in Proposition \ref{prop:bound}, that this uniform convergence implies that $\lim_{n\rightarrow \infty} n\var(R_n)=0$.
\item[(c)] The above convergence indicates $S(P_n,Q)$ can be approximated by the linear quantity $\int f(x)\dd P_n$. Then, \eqref{eq:cltone} and \eqref{eq:varone} are simply the limit statements (in distribution and $L^2$, respectively) applied to this linearization.
\end{itemize}
We omit the proof of the two-sample case as the changes to the argument (see Theorem 3.3. in \cite{Del2019}, for the squared Wasserstein distance) adapt in a straightforward way to the entropic case. 
\end{proof}

We finish this section with the statement and proof of Proposition~\ref{convergence}, which may be of independent interest. We defer to Appendix \ref{app:omitted} the statement and proof of Proposition \ref{prop:bound} since many of the arguments have been presented in \cite{Del2019}.

\begin{proposition}\label{convergence}
Let $P_n,Q_n$ be empirical measures, $P$ and $Q$ both assumed subgaussian. 
There exist $(f_n,g_n)$ optimal potentials for $(P_n,Q_n)$ such that $(f_n,g_n)$ converges uniformly in compacts to optimal potentials $(f,g)$ for $P$ and $Q$.
\end{proposition}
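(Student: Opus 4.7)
The plan is to run a standard compactness-plus-limit-identification argument. By Lemma~\ref{lem:uniform_subg}, the empirical marginals $P_n, Q_n$ together with $P,Q$ are almost surely uniformly $\tilde\sigma^2$-subgaussian for some (random but finite) $\tilde\sigma$, so Proposition~\ref{prop:pointwise_derivative_bound} applies uniformly in $n$: we may choose smooth optimal potentials $(f_n, g_n)$ for $(P_n, Q_n)$ (via Proposition~\ref{extend-potentials}) such that $\tilde f_n \defeq f_n - \tfrac12\|\cdot\|^2$ and $\tilde g_n \defeq g_n - \tfrac12\|\cdot\|^2$ are uniformly bounded together with their first derivatives on every compact subset of $\RR^d$, with constants depending only on $d$ and $\tilde\sigma$. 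After fixing the residual additive ambiguity $(f_n,g_n) \leftrightarrow (f_n + c_n, g_n - c_n)$ by, say, imposing $\int f_n \, \dd P_n = \int f \, \dd P$, Arzel\`a--Ascoli allows me to extract from any subsequence a further subsequence $(f_{n_k}, g_{n_k})$ converging uniformly on compacts to some continuous pair $(f_\infty, g_\infty)$ still obeying the same pointwise bounds.

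The crux is showing that $(f_\infty, g_\infty)$ satisfies the dual optimality conditions~\eqref{eq:dual-opt} for $(P,Q)$. For each fixed $x\in\RR^d$ I want to pass to the limit in
\begin{equation*}
\int e^{f_{n_k}(x) + g_{n_k}(y) - \frac12\|x-y\|^2}\,\dd Q_{n_k}(y) = e^{\tilde f_{n_k}(x)}\int e^{\tilde g_{n_k}(y) + x\cdot y}\,\dd Q_{n_k}(y) = 1\,.
\end{equation*}
On any ball $B_R$, the integral over $B_R$ converges to its natural limit from uniform convergence of $\tilde g_{n_k}$ on $B_R$ together with the (almost sure) weak convergence $Q_{n_k} \Rightarrow Q$. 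The hard part is the tail $\{\|y\|>R\}$, since the naive quadratic bound $|\tilde g_n(y)|\lesssim (1+\tilde\sigma^2)\|y\|^2$ from Proposition~\ref{prop:pointwise_derivative_bound} need not be dominated by the $\tilde\sigma^2$-subgaussian tail of $Q_n$. The key observation that saves us is a hidden concavity: rewriting~\eqref{eq:dual-opt} gives $\tilde g_n(y) = -\log \int e^{\tilde f_n(x)+x\cdot y}\,\dd P_n(x)$, which is the negative of a log-moment-generating function, hence concave in $y$. The concave-tangent inequality at $y_0=0$, combined with the pointwise bounds on $\tilde g_n(0)$ and $\nabla\tilde g_n(0)$ from Proposition~\ref{prop:pointwise_derivative_bound}, yields a uniform linear majorant $\tilde g_n(y) \leq A_{\tilde\sigma,d} + B_{\tilde\sigma,d}\|y\|$. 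Together with the subgaussianity of $Q_n$ this gives a uniform-in-$k$ $L^{1+\delta}$ bound on $y\mapsto e^{\tilde g_{n_k}(y)+x\cdot y}$ against $Q_{n_k}$ for some $\delta>0$, and Vitali's theorem then lets me pass to the limit in the tail.

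Handling the second condition in~\eqref{eq:dual-opt} symmetrically shows $(f_\infty, g_\infty)$ satisfies both equations, so by the converse statement after~\eqref{eq:dual-opt} it is a pair of optimal potentials for $(P,Q)$. Since such potentials are unique up to the additive-constant shift and our normalization pins this constant down, the limit is the same along every subsequence, and the full sequence $(f_n,g_n)$ converges uniformly on compact sets to $(f,g)$. The main obstacle is the uniform tail bound in the middle step; once the concavity of $\tilde g_n$ (and the analogous concavity of $\tilde f_n$) is exploited to replace the quadratic growth estimate by a linear one, the rest of the argument is routine.
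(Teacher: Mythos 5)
Your proposal is correct and its skeleton is the same as the paper's: uniform subgaussianity of the empirical measures (Lemma~\ref{lem:uniform_subg}), the potentials of Proposition~\ref{extend-potentials} together with the derivative bounds of Proposition~\ref{prop:pointwise_derivative_bound} feeding an Arzel\`a--Ascoli extraction, identification of any subsequential limit by passing to the limit in the extended optimality conditions~\eqref{eq:dual-opt}, and a.s.\ uniqueness up to an additive constant to upgrade subsequential to full convergence. Where you genuinely diverge is the uniform tail control needed to swap limit and integral in $\int e^{g_{n}(y)-\frac12\|x-y\|^2}\,\dd Q_{n}(y)$. You take the only available pointwise bound on $\tilde g_n=g_n-\frac12\|\cdot\|^2$ to be the quadratic one from Proposition~\ref{prop:pointwise_derivative_bound} and repair this with a genuinely nice observation: $\tilde g_n(y)=-\log\int e^{\tilde f_n(x)+x\cdot y}\,\dd P_n(x)$ is minus a log-moment-generating function, hence concave, so the tangent inequality at $0$ plus the bounds on $\tilde g_n(0)$ and $\nabla\tilde g_n(0)$ give a uniform linear majorant. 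That argument is valid, but the detour is unnecessary: the upper bound in Proposition~\ref{extend-potentials}, $g_n(y)\le\frac12\bigl(\|y\|+\sqrt{2d}\,\tilde\sigma\bigr)^2$, already gives $\tilde g_n(y)\le d\tilde\sigma^2+\sqrt{2d}\,\tilde\sigma\|y\|$, i.e.\ exactly the linear majorant you construct; this is what the paper invokes when it says the integrand is dominated by a uniformly integrable function, after which the compact/tail splitting, uniform convergence on the compact piece, and the strong law for $\int v_\infty\,\dd Q_n$ are the same in both arguments (your Vitali-with-varying-measures step plays the role of the paper's $\varepsilon$-splitting).

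Two small points to tighten. Your normalization $\int f_n\,\dd P_n=\int f\,\dd P$ shifts the potentials of Proposition~\ref{extend-potentials} by constants $c_n$, and you should note that the $c_n$ are a.s.\ bounded (e.g.\ $\int f_n\,\dd P_n=\frac12 S(P_n,Q_n)$ is nonnegative and bounded by empirical second moments) so that the pointwise bounds needed for Arzel\`a--Ascoli survive the shift; the paper sidesteps this by keeping the normalization $E_{P_n}f_n=E_{Q_n}g_n$ and checking $E_P f_\infty=E_Q g_\infty$ in the limit. Also, uniqueness only gives $f_\infty=f$ $P$-a.s.\ and $g_\infty=g$ $Q$-a.s.; to conclude uniform convergence on compacts to $(f,g)$ as functions on $\RR^d$, use that both pairs satisfy the extended relation $f_\infty(x)=-\log\int e^{g_\infty(y)-\frac12\|x-y\|^2}\,\dd Q(y)$ for all $x$, so a.s.\ equality of $g_\infty$ and $g$ forces equality everywhere.
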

\begin{proof}

 The proof is inspired by \cite{Feydy2018} and we divide it in two steps:
 \begin{itemize}\item[Step 1]  By using the following extended version of the Arzela-Ascoli theorem we find a convergent subsequence: suppose $h_n$ is a sequence of functions in $\mathbb{R}^d$ satisfying
\begin{itemize}
\item[(a)] Local equicontinuity: for each $x_0\in\mathbb{R}^d$ and $\epsilon>0$, there is a $\delta>0$ such that
\begin{equation} \nonumber
\label{eq:equi}
||x-x_0||<\delta \quad \text{implies} \quad |h_n(x)-h_n(x_0)|<\epsilon \quad \text{for all } n
\end{equation}
\item[(b)]Pointwise boundedness: for each $x$, the sequence $h_n(x)$ is bounded.
\end{itemize}
Then, there exist a subsequence $h_{n_j}$ that converges uniformly on compacts to a continuous function $h$.
\item[Step 2] We prove the limit functions are optimal for $(P,Q)$ and conclude the entire sequence converges by a uniqueness argument.
\end{itemize}
\textit{Proof of Step 1}:
By Lemma~\ref{lem:uniform_subg} in Appendix \ref{app:technical}, there exists a (random) $\sigma^2$ such that the measures $\{P_n\}$ are uniformly $\sigma^2$-subgaussian.
We choose $(f_n, g_n)$ and $(f, g)$ as in Proposition~\ref{extend-potentials} in Appendix \ref{app:omitted}

By Proposition~\ref{extend-potentials} in Appendix \ref{app:omitted}, $(f_n, g_n)$ are pointwise bounded by a quantity independent of $n$.
Likewise, Proposition~\ref{prop:pointwise_derivative_bound} implies that the derivatives of $f_n$ and $g_n$ are also pointwise bounded, which implies local equicontinuity.

We conclude for a certain subsequence $n_j$, $(f_{n_j},g_{n_j})$  converges to some $(f_\infty,g_\infty)$.

\textit{Proof of Step 2:}
It is easy to verify (by Jensen's inequality and dominated convergence) that Proposition 11 in \cite{Feydy2018}, holds in arbitrary domains (not necessarily bounded), and we can assume $(f,g)$ are unique $(P\otimes Q)$-a.s. once we fix $E_P f(X) = E_Q g(Y)$.
Notice that if $f_\infty=f,g_\infty=g$, $P$-a.s. and $Q$-a.s. we can conclude: on each compact we apply the above argument starting with any arbitrary subsequence $n_k$ and find a subsequence such that $f_{n_{k_j}}\rightarrow f, g_{n_{k_j}}\rightarrow g$; therefore $f=\lim f_n(x)$ and $g(y)=\lim g_n$, uniformly in compacts.

It therefore suffices to show that that i) $(f_\infty, g_\infty)$ satisfy the dual optimality conditions and that $f_\infty$ (respectively $g_\infty$) is $P$ (respectively $Q$) integrable, with $E_P f_\infty(X) = E_Q g_\infty(Y)$.
Let's prove i.
Passing to a subsequence, we assume $f_n \to f$ and $g_n \to g$ uniformly on compact sets.
We have
\begin{align*}
e^{-f_\infty(x)} & = \lim_{n \to \infty} \int e^{g_{n}(y) -  \frac 1 2 \|x - y\|^2} \, \dd Q_{n}(y) \\
e^{-g_\infty(y)} & = \lim_{n \to \infty} \int e^{f_{n}(x) -  \frac 1 2 \|x - y\|^2} \, \dd P_{n}(x)\,.
\end{align*}
It suffices to show that the order of the limit and integral on the right side can be swapped.
For a fixed $x$ we observe that Proposition~\ref{extend-potentials} implies that the integrand is dominated by a uniformly integrable function.
Therefore for an arbitrary $\varepsilon > 0$ there exists a compact set $K$ such that
\begin{align*}
\int_{K^C}  e^{g_{\infty}(y) -  \frac 1 2 \|x - y\|^2} \, \dd Q(y) & \leq \varepsilon \\
\int_{K^C}  e^{g_{n}(y) -  \frac 1 2 \|x - y\|^2} \, \dd Q_{n}(y) & \leq \varepsilon \quad \quad \forall n \geq 0\,.
\end{align*}

Write $v_n(y) = e^{g_{n}(y) -  \frac 1 2 \|x - y\|^2}$ and $v_\infty = e^{g_{\infty}(y) -  \frac 1 2 \|x - y\|^2}$.
Since $g_{n}$ converges uniformly in compacts so does $v_n$; in particular, there exists $n_0$ such that if $n\geq n_0$, \begin{equation}\label{eq:unifK}
\left|v_n(y)-v_\infty(y)\right|\leq \epsilon, \forall y\in K.
\end{equation}
Also, since $v_\infty$ is $Q$-integrable, by the strong law of large numbers, almost surely there exists an $n_1$ such that if $n\geq n_1$,
\begin{equation}\label{eq:weakc}
\left|\int v_\infty(y)\dd Q_n(y)-\int v_\infty(y)\, \dd Q(y)\right| \leq \epsilon,
\end{equation}
We obtain that for $n$ sufficiently large,
\begin{equation*}
\left|\int v_n(y)\dd Q_n(y)- \int v_\infty(y) \, \dd Q(y)\right| \leq 4 \varepsilon\,.
\end{equation*}
Since $\varepsilon$ was arbitrary, we obtain
\begin{equation*}
e^{-f_\infty(x)} = \int v_\infty(y) \, \dd Q(y) = \int e^{g_{\infty}(y) -  \frac 1 2 \|x - y\|^2} \, \dd Q_{n}(y)\,.
\end{equation*}
Repeating the proof for $g_\infty$, we obtain that $(f_\infty, g_\infty)$ satisfy the dual optimality conditions.

Clearly $(f_\infty, g_\infty)$ are integrable by dominated convergence, and an argument analogous to the one used to show dual optimality establishes that $E_P f_\infty(X) = E_Q g_\infty(Y)$.
The claim is therefore proved.
 \end{proof}

\section{Application to entropy estimation}\label{sec:entropy}
In this section, we give an application of entropic OT to the problem of entropy estimation.
First, in Proposition \ref{propent} we establish a new relation between entropic OT and the differential entropy of the convolution of two measures.
Then, as a corollary of this and the previous sections results we prove Theorem \ref{theo:3}, stating that entropic OT provides us with a novel estimator for the differential entropy of the (independent) sum of a subgaussian random variable and a gaussian random variable, and for which performance guarantees are available.

Throughout this section $\nu$ denotes a translation invariant measure. Whenever $P$ has a density $p$ with respect to $\nu$, we define its $\nu$-differential entropy as  
$h(P):=-\int p(x)\log p(x) d\nu(x)= -H(P\lvert \nu).$

The following proposition links the differential entropy of a convolution with the entropic cost.
\begin{proposition}\label{propent}

Let $\Phi_g$ be the measure with $\nu$-density $\phi_g(y)=Z_g^{-1}e^{-g(y)}$ for a smooth $g$, and define $Q=P\ast \Phi_g$, with $P\in\mathcal{P}(\mathbb{R}^d)$ arbitrary. The $\nu$-density of $Q$, $q(y)$, satisfies
\begin{equation}\nonumber q(y) = \int \phi_g(y-x)\dd P(x) = \int Z_g^{-1}e^{-g(y-x)}\dd P(x).
\end{equation}
Consider the cost function $c(x,y):=g(x-y)$ (not necessarily quadratic). Then, the optimal entropic transport cost and differential entropy are linked through
\begin{equation}
 \label{eq:I} h(P\ast \Phi_g)= S(P,P\ast \Phi_g)  + \log(Z_g).
\end{equation}

\end{proposition}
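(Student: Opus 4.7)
The plan is to prove both inequalities $S(P, P\ast\Phi_g) \leq h(P\ast\Phi_g) - \log Z_g$ and the reverse by exhibiting an explicit coupling that is optimal for the primal and a matching pair of dual potentials. Throughout, write $Q = P\ast\Phi_g$ with $\nu$-density $q$, and read all densities with respect to the translation-invariant reference measure $\nu$.

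First, I would introduce the candidate coupling $\pi^*$ defined as the law of $(X, X+Z)$ where $X\sim P$ and $Z\sim\Phi_g$ are independent. By construction $\pi^*\in\Pi(P,Q)$, and its Radon--Nikodym derivative with respect to $P\otimes\nu$ equals $\phi_g(y-x)$, so its derivative with respect to $P\otimes Q$ equals $\phi_g(y-x)/q(y)$. Plugging into the primal objective,
\begin{equation*}
\int g(x-y)\,\dd\pi^*(x,y) + H(\pi^*\,|\,P\otimes Q) = \int [g(x-y)-g(y-x)]\phi_g(y-x)\,\dd P(x)\dd\nu(y) - \log Z_g + h(Q),
\end{equation*}
where the $-\log q(y)$ piece of the log-density integrates against the $Q$-marginal of $\pi^*$ to produce $h(Q)$, and the $-g(y-x)$ piece combines with the transport cost. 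When $g$ is even (which is the intended case, e.g.\ $g(z)=\|z\|^2/(2\epsilon)$ for Gaussian noise) the first bracket vanishes, and we obtain $S(P,Q) \leq h(Q) - \log Z_g$.

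For the matching lower bound I would produce explicit dual potentials. Take $f^*(x)\equiv 0$ and $g^*(y) = -\log Z_g - \log q(y)$. Then verify the two first-order conditions \eqref{eq:dual-opt}: for the first, using $Q$-a.e.\ evaluation and translation invariance of $\nu$,
\begin{equation*}
\int e^{f^*(x)+g^*(y)-g(x-y)}\,\dd P(x) = \frac{1}{Z_g q(y)}\int e^{-g(y-x)}\,\dd P(x) = \frac{1}{Z_g q(y)}\cdot Z_g q(y) = 1,
\end{equation*}
and the second condition follows from $\int \phi_g(y-x)\,\dd\nu(y)=1$. By the sufficiency part of the duality theory (the converse statement just after \eqref{eq:dual-opt}), $(f^*,g^*)$ is an optimal pair. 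Evaluating the dual functional $\mathcal{A}^{P,Q}(f^*,g^*)$ gives $\int g^*\,\dd Q - 1 + 1 = -\log Z_g + h(Q)$, matching the primal upper bound.

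The main obstacle I expect is ensuring the technical hypotheses under which both sides of \eqref{eq:I} are well defined: we need $h(Q)$ finite (so that $\log q\in L_1(Q)$ and $g^*\in L_1(Q)$) and $\int g(x-y)\,\dd\pi^*(x,y)$ finite, which together require mild tail/regularity assumptions on $P$ and on $g$ (both automatic in the Gaussian application of Section~\ref{sec:entropy}). A secondary subtlety is the asymmetry between $c(x,y)=g(x-y)$ and $\phi_g(y-x)=Z_g^{-1}e^{-g(y-x)}$; this is harmless whenever $g$ is symmetric, which covers the case of interest, and otherwise one simply replaces $c$ by its transpose without altering the value of $S$.
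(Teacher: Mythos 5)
Your proof is correct (up to the same mild regularity caveats that the paper also leaves implicit) but it takes a genuinely different route. The paper's argument is purely primal: it introduces the modified cost $S^{\alpha\otimes\beta}$ with reference measure $P\otimes\nu$, uses the two identities $S^{P\otimes\nu}(P,Q)=S(P,Q)+H(P\otimes Q\,\vert\,P\otimes\nu)$ (valid since every $\pi\in\Pi(P,Q)$ has marginals $P,Q$) and $S^{P\otimes\nu}(P,Q)=\inf_{\pi\in\Pi(P,Q)}H(\pi\,\vert\,\Lambda)-\log Z_g$ with $\Lambda=Z_g^{-1}e^{-c}\,P\otimes\nu$, and then observes that $\Lambda$ itself belongs to $\Pi(P,P\ast\Phi_g)$, so the infimum is zero; combining with $H(P\otimes Q\,\vert\,P\otimes\nu)=-h(Q)$ gives \eqref{eq:I}. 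Your coupling $\pi^*$ (the law of $(X,X+Z)$) is exactly this $\Lambda$, so your upper bound is the same computation in primal form; the real difference is your lower bound, obtained from the explicit dual pair $(0,\,-\log Z_g-\log q)$ together with the optimality criterion, whereas the paper never invokes duality here. Note that \eqref{eq:dual} and \eqref{eq:dual-opt} are stated in the paper only for the quadratic cost, so your appeal to the converse after \eqref{eq:dual-opt} tacitly relies on Csisz\'ar's general theory for the cost $c(x,y)=g(x-y)$ --- legitimate, but an extra ingredient the paper's route avoids. What your route buys in exchange is the explicit optimal potentials, which the paper needs later anyway: the proof of Theorem~\ref{theo:3}(b) asserts $(f,g)=(-\log Z_g,-\log q)$ without derivation, and your verification supplies it (your potentials agree with these up to the usual additive shift). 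Your point about the asymmetry between $g(x-y)$ and $g(y-x)$ is also well taken: the paper's proof quietly makes the same switch when computing $Z$ and when identifying the second marginal of $\Lambda$, so evenness of $g$ (or transposing the cost), along with finiteness of $h(Q)$, is implicitly assumed there too.
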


\begin{proof}

Define a more general entropic transportation cost involving the generic $c$ and probability measures $\alpha,\beta$:
\begin{equation}\label{eq:sink}S^{\alpha\otimes\beta}(P,Q) := \inf_{\pi\in \Pi(P,Q)} \left[\int c(x,y)d\pi(x,y) + H(\pi\lvert \alpha\otimes\beta)\right].\end{equation}
Observe we may re-write \eqref{eq:sink} as
\begin{eqnarray}\label{eq:sink2}\nonumber S^{\alpha\otimes\beta}(P,Q) &=& \inf_{\pi\in \Pi(P,Q)} \left[\int_{\mx\times\my} c(x,y)d\pi(x,y) + H(\pi\lvert P\otimes Q)\right] +H(P\otimes Q \lvert \alpha\otimes \beta) \\  & = & S(P,Q) +H(P\otimes Q \lvert \alpha\otimes \beta) . \end{eqnarray}

Additionally, it can be verified an alternative representation for \eqref{eq:sink} is the following
\begin{equation}\label{eq:sink1}S^{\alpha\otimes\beta}(P,Q) = \inf_{\pi\in \Pi(P,Q)}H\left(\pi \bigg\lvert Z^{-1}e^{-c}\alpha\otimes\beta\right) -\log(Z),\end{equation}
where $Z$ is the number making $\Lambda:= Z^{-1}e^{-c}\alpha\otimes\beta$ a \textit{bona fide} probability measure. 

Now, take $\alpha=P$, $\beta=\nu$ and $Q=P\ast \Phi_g$ in the above expressions. For these choices we have $Z=Z_g$. Indeed, by the translation invariance of $\nu$, we have
\begin{eqnarray}\nonumber Z = \iint e^{-c(x,y)}\dd P(x)d\nu(y)&=& \int \left(\int e^{-g(y-x)}d\nu(y)\right)\dd P(x) \\ \nonumber &=&  \int \left(\int e^{-g(y)}d\nu(y)\right)\dd P(x) \\ \nonumber & =& \int Z_g \dd P(x) =Z_g.  \end{eqnarray}
Then, $d\Lambda(x,y)=\dd P(x)\phi_g(y-x)d\nu(y)$, and by marginalization we deduce $\Lambda\in\Pi(P,P\ast \Phi_g)$. Therefore, the right side of \eqref{eq:sink1} equals $H(\Lambda\lvert \Lambda) - \log Z_g= -\log Z_g$. Finally, we combine \eqref{eq:sink2} and \eqref{eq:sink1} to obtain
\begin{equation*}
\label{rel1}-\log Z_g = S(P,P\ast \Phi_g) + H\left(P\otimes \left(P\ast \Phi_g\right) \lvert P \otimes \nu\right),
\end{equation*}
and achieve the final conclusion after noting that
\begin{equation*}
H(P\otimes \left(P\ast \Phi_g\right) \lvert P\otimes \nu) = H(P\lvert P)+H\left(P\ast \Phi_g \vert \nu\right)=H\left(P\ast \Phi_g\lvert \nu\right)=-h(P\ast\Phi_g).
\end{equation*}
\end{proof}
Now we can state the following theorem.
\begin{theorem}\label{theo:3}
Let $P$ be subgaussian, $G\sim\mathcal{N}(0,\sigma_g^2 I_d)$. Denote $Q=P\ast \Phi_g$ the distribution of the sum of an independent $X\sim P$ and $G$, and define the plug in estimator $\hat h(Q) = S(P_n, Q_m) +\log Z_g$ where $P_n$ and $Q_m$ are independent samples from $P$ and $Q$.
Then,
\begin{itemize}
\item[(a)]If $m=n$, $$ \sup_{P}E_{P}|\hat{h}(Q) - h(Q)| \leq  O\left(\frac{1}{\sqrt{n}} \right).$$
\item[(b)] The limit
\begin{equation}
\sqrt{\frac{mn}{m+n}}\left(\hat{h}(Q) - E(\hat{h}(Q)\right) \overset{\mathcal{D}}{\rightarrow} \mathcal{N}\left(0,  \lambda \var_Q(\log q(Y))\right)
\end{equation}
holds, where  $\lambda = \lim_{m, n \to \infty} \frac{n}{m+n}$. Moreover, $\lim_{m,n\rightarrow\infty}\frac{mn}{m+n}\var(\hat{h}(Q))=\lambda \var_Q(\log q(Y))$.
\end{itemize}
\end{theorem}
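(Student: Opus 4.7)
The plan is to reduce both parts to results already proved in the paper, using Proposition~\ref{propent} as a bridge. With $G \sim \cN(0, \sigma_g^2 I_d)$, that proposition applies with $g(y) = \|y\|^2/(2\sigma_g^2)$ and $Z_g = (2\pi\sigma_g^2)^{d/2}$ and gives the deterministic identity $h(Q) = S(P, Q) + \log Z_g$, where $S$ denotes entropic OT for the cost $c(x,y) = \|x-y\|^2/(2\sigma_g^2)$. Hence
$$
\hat h(Q) - h(Q) = S(P_n, Q_m) - S(P, Q).
$$
Pushing forward both measures under $x \mapsto x/\sigma_g$ converts this into an entropic OT problem for the canonical cost $\tfrac12\|x-y\|^2$ studied throughout the paper, to which Theorems~\ref{theo1} and~\ref{theo:clt} directly apply; the hypotheses are met because $P$ is subgaussian by assumption and $Q = P \ast \Phi_g$ is subgaussian as a convolution of two subgaussian measures.

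Part~(a) then follows immediately: with $m = n$, Corollary~\ref{cor:two_sample} combined with the empirical process bound established in the proof of Theorem~\ref{theo1} gives
$$
E\,|S(P_n, Q_n) - S(P, Q)| = O(n^{-1/2})
$$
uniformly over $P$ within a fixed subgaussian class, which is precisely the claimed bound on $E|\hat h(Q) - h(Q)|$.

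For part~(b), the key step is a closed-form identification of an optimal pair of dual potentials for $(P, Q)$. I would take $f^\star(x) \equiv 0$ and $g^\star(y) = -\log q(y) - \log Z_g$. By translation invariance of $\nu$,
$$
\int e^{f^\star(x) + g^\star(y) - c(x,y)}\, \dd Q(y) = \int \phi_g(y-x)\, \dd \nu(y) = 1,
$$
while the defining formula $q(y) = Z_g^{-1}\int e^{-c(x,y)}\, \dd P(x)$ yields
$$
\int e^{f^\star(x) + g^\star(y) - c(x,y)}\, \dd P(x) = \frac{1}{Z_g\, q(y)}\int e^{-c(x,y)}\, \dd P(x) = 1,
$$
so the converse part of the dual optimality conditions (the analogue of~\eqref{eq:dual-opt} for the cost $c$) confirms that $(f^\star, g^\star)$ is optimal. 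Feeding these into the two-sample CLT of Theorem~\ref{theo:clt} and noting that $\var_P(f^\star(X)) = 0$ and $\var_Q(g^\star(Y)) = \var_Q(\log q(Y))$ (shift-invariance of variance) yields the claimed limit law with variance $\lambda\, \var_Q(\log q(Y))$; the same substitution in the accompanying $L^2$-statement gives the asserted variance convergence.

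The only step of substance is the explicit identification of $(f^\star, g^\star)$; everything else is a matter of invoking the earlier theorems. The minor technical point to watch is the $\sigma_g$-rescaling that transports the cost $\|x-y\|^2/(2\sigma_g^2)$ arising in Proposition~\ref{propent} back to the paper's canonical $\tfrac12\|x-y\|^2$, together with the straightforward tracking of the subgaussian parameter of $Q = P \ast \Phi_g$ in terms of those of $P$ and $G$ so that the hypotheses of Theorems~\ref{theo1} and~\ref{theo:clt} remain in force on the rescaled measures.
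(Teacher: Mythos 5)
Your proposal is correct and takes essentially the same route as the paper: both parts are reductions to Theorem~\ref{theo1} and Theorem~\ref{theo:clt} via Proposition~\ref{propent}, with the optimal potentials identified in closed form (your pair $(0,\,-\log q-\log Z_g)$ is the paper's $(-\log Z_g,\,-\log q)$ shifted by a constant, which leaves all variances unchanged). Your explicit verification of the dual optimality conditions and of the $\sigma_g$-rescaling to the canonical cost simply fills in details the paper leaves implicit.
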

\begin{proof}
(a) is a simple re-statement of Theorem \ref{theo1} in the light of Proposition \ref{propent}. (b) is a re-statement of Theorem \ref{theo:clt}, after noting in this case the optimal potentials are $(f,g)=(-\log Z_g, -\log q)$.
\end{proof}
The rate $1/\sqrt n$ in Theorem~\ref{theo:3} is also achieved by a different estimator proposed by~\citet{Goldfeld2018b} (see also \citealp{Weed2018b}), but this estimator lacks distributional limits.

\section{Empirical results}\label{sec:experiments}
\begin{figure}\includegraphics[width=1.0\textwidth]{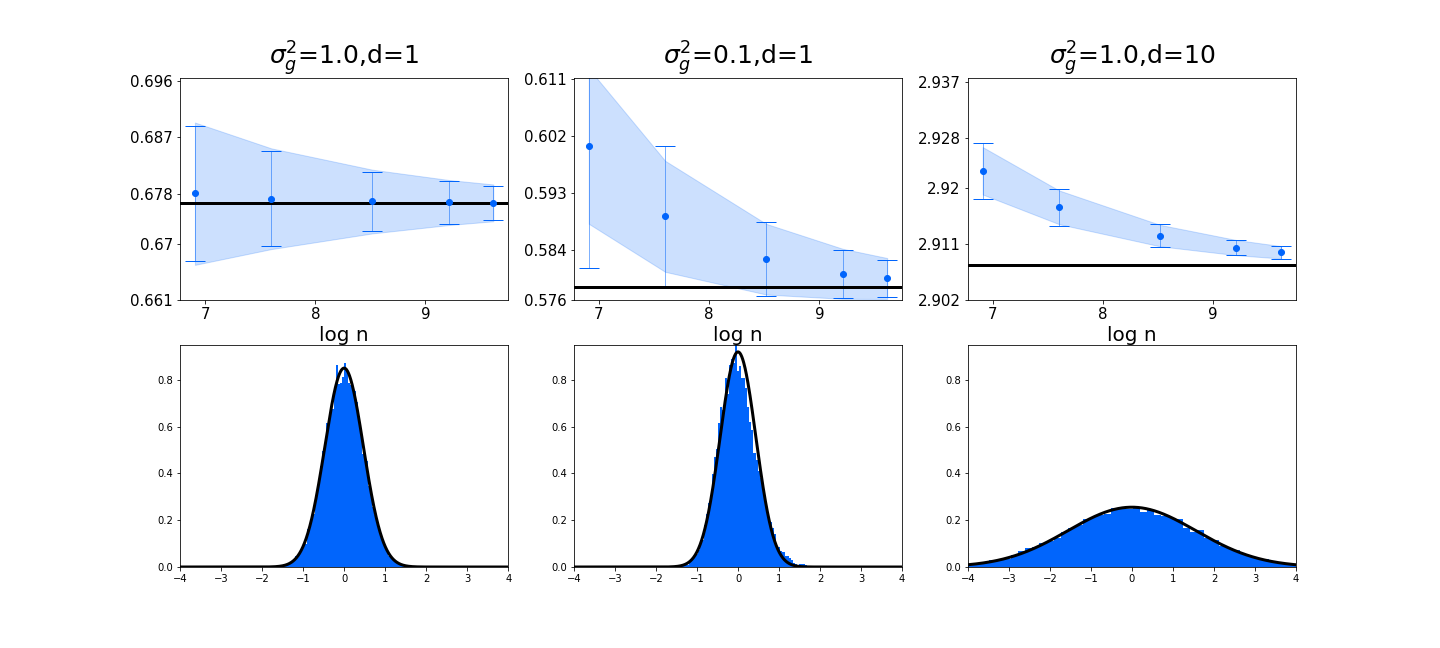}
\caption{Top row: $ES(P_n,Q_n)$ as a function of $n\in\{1000,2000,50000,10000,15000\}$, computed from $16,000$ repetitions for each value of $n$. The shading corresponds to one standard deviation of \mbox{$S(P_n,Q_n)-ES(P_n,Q_n)$}, assuming the asymptotics of Theorem \ref{theo:clt} are valid. Error bars are one sample standard deviation long on each side. Both $x$ and $y$ axes are in logarithmic scale. Bottom row: histograms of $\sqrt{\frac{nn}{n+n}}\left(S(P_n,Q_n)-ES(P_n,Q_n))\right)$ when $n=15000$. Ground truth (numerical integration) is shown with black solid lines.}
\label{fig:clt}
\end{figure}
We provide empirical evidence supporting and illustrating our theoretical findings. We focus on the entropy estimation problem because there are closed form expressions for the potentials (see Theorem \ref{theo:3}), and because it allows a comparison with the estimator studied in \citep{Goldfeld2018b}.

Specifically, consider $X\sim P=\frac{1}{2}\left(\mathcal{N}(1_d,I_d)+\mathcal{N}(-1_d,I_d)\right)$, the mixture of the gaussians centered at $1_d:=(1,\ldots,1)$ and $-1_d$.  We aim to estimate the entropy of the new mixture $Q=P\ast\Phi_g$.

Figure \ref{fig:clt}, top, shows the convergence of $E S(P_n, Q_n)$ to $S(P, Q)$.
Consistent with the bound in Theorem~\ref{theo1} and Corollary~\ref{cor:varying-ep}, $S(P_n, Q_n)$ is a worse estimator for $S(P, Q)$ when $d$ is large or the regularization parameter is small. We also plot the predicted (shading) and actual (bars) fluctuations of $S(P_n, Q_n)$ around its mean. Though the CLT holds only in the asymptotic limit, these experiments reveal that the empirical fluctuations in the finite-$n$ regime are broadly consistent with the predictions of the CLT.
Figure \ref{fig:clt}, bottom, shows that the empirical distribution of the rescaled fluctuations is an excellent match for the predicted normal distribution.

In Figure \ref{fig:means} we compare the performance between entropic OT-based estimators from Theorem \ref{theo:3} and $\hat{h}_{\text{m.g.}}(Q)$, the one from \citep{Goldfeld2018b}, where $h(P\ast \Phi_g)$ is estimated as the entropy of the mixture of gaussians $P_n\ast \Phi_g$, in turn approximated by Monte Carlo integration. We consider two OT-based estimators, $\hat{h}_{\text{ind}}(Q)$ where $P_n,Q_n$ are completely independent (i.e., the one used for Figure \ref{fig:clt}), and $\hat{h}_{\text{paired}}(Q)$ where samples $Q_n$ are drawn by adding gaussian noise to $P_n$. Observe that our sample complexity and CLT results are only available for $\hat{h}_{\text{ind}}(Q)$.

Results show a clear pattern of dominance, with $E\hat{h}_{\text{paired}}(Q)$ achieving the fastest convergence. The main caveat is the extra memory cost: while $\hat{h}_{\text{m.g.}}(Q)$ can be computed sequentially with each operation requiring $O(n)$ memory, in the most naive implementation (used here) both $\hat{h}_{\text{paired}}(Q), \hat{h}_{\text{ind}}(Q)$ demand $O(n^2)$ space for storing the matrix $D_{i,j}=e^{-||x_i-y_j||^2/{2\sigma_g^2}}$, to which the Sinkhorn algorithm is applied. This memory requirement might be alleviated with the use of stochastic methods \citep{Genevay2016,Bercu2018}. 

We leave for future work both the implementation of more scalable methods for entropic OT, and a detailed theoretical analysis of different entropic OT-based estimators (e.g. $\hat{h}_{\text{paired}}(Q)$ v.s. $\hat{h}_{\text{ind}}(Q)$) that may bring about a better understanding of their observed substantial differences.

\begin{figure}[ht]\includegraphics[width=1.0\textwidth]{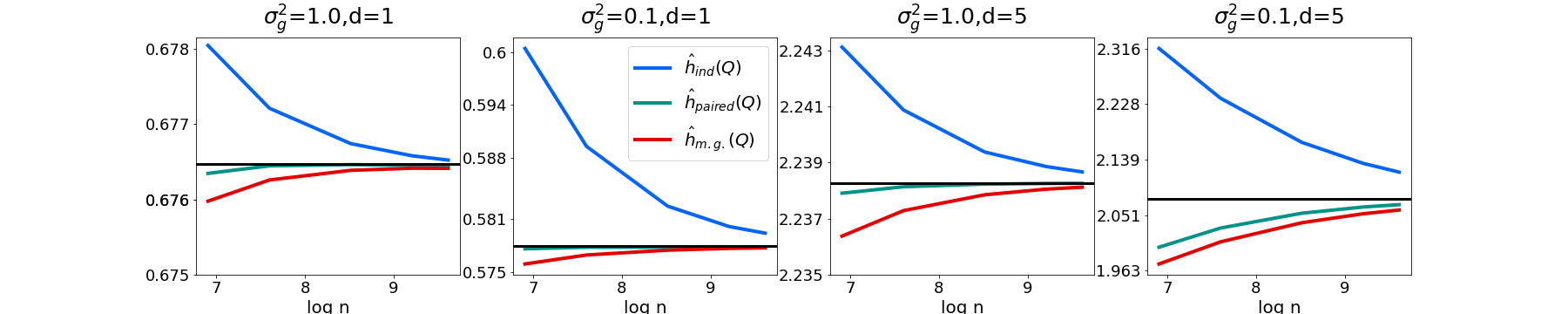}
\caption{Comparison between $E\hat{h}_{\text{ind}}(Q),E\hat{h}_{\text{paired}}(Q),E\hat{h}_{\text{m.g.}}(Q)$. Details are the same as in Figure \ref{fig:clt}.}
\label{fig:means}
\end{figure}

\appendix 
\section{Omitted Results and Proofs}\label{app:omitted}

\begin{proposition}\label{extend-potentials}
Let $P$ and $Q$ be two $\sigma^2$-subgaussian distributions.
Then there exist smooth optimal potentials $(f, g)$ for $S(P, Q)$ such that
\begin{alignat*}{2}
-d \sigma^2(1+ \frac 12 (\|x\| + \sqrt{2 d}\sigma)^2) - 1 \mskip\thickmuskip&& \leq f(x) &\leq \frac 12 (\|x\| + \sqrt{2 d}\sigma)^2\\
-d \sigma^2(1+ \frac 12 (\|y\| + \sqrt{2 d}\sigma)^2) - 1 \mskip\thickmuskip&& \leq g(y) & \leq \frac 12 (\|y\| + \sqrt{2 d}\sigma)^2
\end{alignat*}
and the dual optimality conditions~\eqref{eq:dual-opt} hold for \emph{all} $x, y \in \RR^d$.
\end{proposition}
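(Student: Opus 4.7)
The plan is to start from an optimal pair $(\bar f, \bar g) \in L_1(P)\times L_1(Q)$ produced by the standard entropic OT existence theory for finite-second-moment marginals, shift by a constant so that $E_P \bar f = E_Q \bar g$ (strong duality then forces this common value to equal $\tfrac{1}{2}S(P,Q)$, which is $\geq 0$ since $S(P,Q)$ is an infimum of nonnegative quantities), and extend them to smooth functions on all of $\mathbb{R}^d$ by setting
\[
f(x) := -\log \int e^{\bar g(y) - \tfrac{1}{2}\|x-y\|^2}\,\dd Q(y), \qquad g(y) := -\log \int e^{\bar f(x) - \tfrac{1}{2}\|x-y\|^2}\,\dd P(x).
\]
Subgaussianity of $P, Q$ makes these integrals finite for every $x, y$, and the Gaussian kernel makes $f, g$ smooth by dominated convergence. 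Because $f = \bar f$ $P$-a.s.\ and $g = \bar g$ $Q$-a.s., swapping $\bar g$ for $g$ inside the integral defining $f$ (and $\bar f$ for $f$ inside the one defining $g$) leaves it unchanged, so \eqref{eq:dual-opt} holds for every $x, y \in \mathbb{R}^d$.

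The upper bound on $f$ follows from Jensen's inequality applied to the defining integral:
\[
f(x) \;\leq\; \tfrac{1}{2}E_Q\|x-Y\|^2 - E_Q \bar g(Y).
\]
The second term is $\leq 0$ by the normalization. For the first, $E_Q\|Y\|^2 \leq 2 d\sigma^2$ (via Jensen applied to $\exp$ in the subgaussianity hypothesis) together with $E_Q\|Y\|\leq\sqrt{E_Q\|Y\|^2}$ and the triangle inequality yield $E_Q\|x-Y\|^2 \leq (\|x\|+\sqrt{2d}\,\sigma)^2$. The analogous bound on $g$ follows by swapping the roles of $P$ and $Q$.

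For the lower bound on $f$, inject the pointwise upper bound $\bar g(y) \leq \tfrac{1}{2}(\|y\|+\sqrt{2d}\,\sigma)^2$ (valid $Q$-a.s., which suffices inside an integral against $Q$) into the defining integral:
\[
e^{-f(x)} \;\leq\; \int \exp\!\Bigl(\tfrac{1}{2}(\|y\|+\sqrt{2d}\,\sigma)^2 - \tfrac{1}{2}\|x-y\|^2\Bigr)\,\dd Q(y).
\]
Expanding the exponent, applying $x\cdot y \leq \|x\|\|y\|$, and factoring out everything independent of $y$ reduces the right-hand side to $e^{(s^2-\|x\|^2)/2}\cdot E_Q e^{\alpha\|Y\|}$ with $s := \sqrt{2d}\,\sigma$ and $\alpha := \|x\|+s$. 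A weighted AM--GM split $\alpha\|y\| \leq \tfrac{d\sigma^2\alpha^2}{2} + \tfrac{\|y\|^2}{2d\sigma^2}$ then converts the subgaussian bound $E_Q e^{\|Y\|^2/2d\sigma^2} \leq 2$ into $E_Q e^{\alpha\|Y\|} \leq 2\, e^{d\sigma^2\alpha^2/2}$. Taking logarithms, discarding the nonpositive $-\tfrac{1}{2}\|x\|^2$, and using $\log 2 \leq 1$ produces exactly the stated bound; the lower bound on $g$ is symmetric.

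The main obstacle is ensuring that \emph{both} marginal identities in \eqref{eq:dual-opt} hold at \emph{every} point rather than only a.s., after replacing $(\bar f, \bar g)$ by continuous representatives---modifying one potential can in principle break the other's constraint. The two-step construction above resolves this: each potential's defining integral enforces its own constraint by fiat, and the a.s.\ equality between original and extended potentials makes the substitution invisible to the other marginal integration. A secondary subtlety is the interlocked nature of the bounds---the lower bound on $f$ leans on the upper bound for $g$ already having been established---so the two halves of the argument must be executed in the correct order.
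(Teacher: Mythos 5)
Your proof is correct, and the core construction is the paper's: extend the potentials by the entropic transform, obtain the quadratic upper bounds from Jensen's inequality together with the normalization $E_P\bar f=E_Q\bar g=\tfrac12 S(P,Q)\ge 0$, and obtain the lower bounds by inserting the ($Q$-a.s.) quadratic upper bound on $\bar g$ into the defining integral and using the subgaussian estimate $E_Q e^{t\|Y\|}\le 2e^{d\sigma^2 t^2/2}$ (Lemma~\ref{lem:subg_moments}). Where you genuinely diverge is the verification that \eqref{eq:dual-opt} holds at \emph{every} point. The paper defines $g(y)=-\log\int e^{f(x)-\frac12\|x-y\|^2}\,\dd P(x)$ in terms of the \emph{new} $f$, so only one marginal condition holds by construction; it then compares dual objectives to show $(f,g)$ is optimal and invokes the equality case of Jensen (strict concavity of $\log$) to conclude $g=g_0$ $Q$-a.s., from which the other condition follows. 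You instead define $g$ through the original $\bar f$ and observe that the a.s.\ optimality identities for $(\bar f,\bar g)$ say exactly $f=\bar f$ $P$-a.s.\ and $g=\bar g$ $Q$-a.s., so $\bar g$ may be swapped for $g$ (and $\bar f$ for $f$) inside the defining integrals and both conditions hold everywhere at once; this is shorter and avoids the strict-concavity step. Two small points to close off: optimality of the new pair should be stated explicitly, which is immediate since $f\in L_1(P)$, $g\in L_1(Q)$ (from $f=\bar f$ $P$-a.s., $g=\bar g$ $Q$-a.s., or from your bounds and finite second moments) and the paper's converse after \eqref{eq:dual-opt} says any integrable pair satisfying the conditions is optimal --- the paper's longer route re-proves this by hand; and the noncircularity you flag at the end can be dispatched even more directly by deriving the $Q$-a.s.\ bound on $\bar g$ from Jensen applied to $\bar g$'s own a.s.\ identity (as the paper does for $g_0$), rather than routing through the new $g$.
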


\begin{proof}[Proof of Proposition \ref{extend-potentials}]
Let $(f_0, g_0)$ be any pair of optimal potentials.
Since $(f_0 + K, g_0 - K)$ also satisfy the optimality conditions and $f_0 \in L_1(P)$ and $g_0 \in L_1(Q)$,  we can assume without loss of generality that $E_P f_0(X) = E_Q g_0(Y) = \frac 12 S(P, Q) \geq 0$.
We define
\begin{align*}
f(x) & = - \log \int e^{g_0(y) - \frac 12 \|x - y\|^2} \, \dd Q(y) \\
g(y) & = - \log \int e^{f(x) - \frac 1 2 \|x - y\|^2} \, \dd P(x)\,,
\end{align*}
for all $x, y \in \RR^d$.

We need to  check that these integrals are well defined.
First, Jensen's inequality implies
\begin{align*}
g_0(y) & = - \log \int e^{f_0(x) - \frac 12 \|x - y\|^2} \, \dd P(x) \\
& \leq - E_P f_0(X) + \frac 12 E_P \|X - y\|^2 \\
& \leq \frac 12 E_P \|X - y\|^2
\end{align*}
for $Q$-a.e.\ y.
Therefore
\begin{equation*}
e^{g_0(y) - \frac 12 \|x - y\|^2} \leq e^{\frac 1 2 E_P \|X - y\|^2 - \frac 1 2 \|x - y\|^2}
\end{equation*}
for $Q$-a.e.\ y.
By Lemma~\ref{lem:subg_moments} in Appendix \ref{app:technical}, $E_P \|X\|^2 \leq 2 d \sigma^2$, which implies that $e^{g_0(y) - \frac 12 \|x - y\|^2}$ is dominated by $e^{d \sigma^2 + (\|x\| + \sqrt{2} d \sigma) \|y\|}$.
Subgaussianity implies
\begin{equation*}
\int e^{d \sigma^2 + (\|x\| + \sqrt{2} d \sigma) \|y\|} \, \dd Q(y) \leq 2 e^{d \sigma^2(1+ \frac 12 (\|x\| + \sqrt{2 d}\sigma)^2} < \infty
\end{equation*}
Therefore $f(x)$ is well defined for all $x \in \RR^d$.
The same argument used to bound $g_0$ holds for $f$ as well, which implies that $g$ is also well defined.
Therefore our definitions of $f$ and $g$ are valid on the whole space, and moreover the claimed lower bounds on $f$ and $g$ hold.
Jensen's inequality combined with the inequalities $E_Q g_0(Y) \geq 0$ and $E_P f(X) \geq 0$ yield the upper bounds.
The smoothness of $f$ and $g$ follows from an easy application of dominated convergence.

We now show that $(f, g)$ are optimal potentials.
By construction $\int e^{f(x) + g(y) - \frac{1}{2}||x- y||^2} \, \dd P(x) = 1$ for all $y \in \RR^d$.
Now, note that
\begin{align*}
\int e^{f(x) + g(y) - \frac{1}{2}||x- y||^2} \, \dd P(x) \dd Q(y) & = \int e^{f(x) + g_0(y) - \frac{1}{2}||x- y||^2} \, \dd P(x) \dd Q(y) \\
& = \int e^{f_0(x) + g_0(y) - \frac{1}{2}||x- y||^2} \, \dd P(x) \dd Q(y)\,.
\end{align*}
Jensen's inequality yields
\begin{align*}
\int \left(f - f_0\right)(x) \, \dd P(x) + \int (g - g_0)(y) \, \dd Q(y) & \geq - \log \int e^{f_0(x) - f(x)} \, \dd P(x) - \log \int e^{g_0(y) - g(y)} \, \dd Q(y) \\
& = - \log \int e^{f_0(x) + g_0(y) - \frac{1}{2}||x- y||^2} \, \dd P(x) \dd Q(y) \\ &\quad- \log \int e^{f(x) + g_0(y) - \frac{1}{2}||x- y||^2} \, \dd P(x) \dd Q(y) \\
& = 0\,.
\end{align*}
Since $(f_0, g_0)$ maximizes~\eqref{eq:dual}, so does $(f, g)$. Therefore $(f, g)$ are optimal potentials.
In particular, this implies that $\int \left(g - g_0\right)(y) \, \dd Q(y) = \log \int e^{g_0(y) - g(y)} \, \dd Q(y)$, and hence \mbox{$g = g_0$} $Q$-almost surely by the strict concavity of the logarithm function.
We obtain that \mbox{$\int e^{f(x) + g(y) - \frac{1}{2}||x- y||^2} \, \dd Q(y) = \int e^{f(x) + g_0(y) - \frac{1}{2}||x- y||^2} \, \dd Q(y) = 1$} for all $x \in \RR^d$.
\end{proof}

\begin{proof}[Proof of proposition \ref{prop:pointwise_derivative_bound}]
We choose the potentials $f$ and $g$ as in Proposition~\ref{extend-potentials}.
That establishes the $k = 0$ case.

For convenience, write $\overline f(x) = f(x) - \frac 12 \|x \|^2$.
We seek to bound $|D^\alpha \overline f(x)|$.

Our calculation is similar to classical calculations which relate the cumulants of a distribution to its moments \citep[see][Section 2.3]{McC87}.
Given a multi-index $\beta$, write
\begin{equation}\label{eq:generalized_moments}
\mu_\beta = \frac{\int y^{\beta} e^{g(y) - \frac 12 \|y\|^2 + x \cdot y} \, \dd Q(y)}{\int e^{g(y) - \frac 12 \|y\|^2 + x \cdot y} \, \dd Q(y)}\,.
\end{equation}
We use the convention that $y^\beta = \prod_{i=1}^d y_i^{\beta_i}$.
The notation $\mu_\beta$ is chosen to remind the reader that these quantities are  moments of $y$ under the tilted measure whose density with respect to $Q$ is proportional to $e^{g(y) - \frac 12 \|y\|^2 + x \cdot y}$.

By the multivariate Fa\'a di Bruno formula \citep[see, e.g.][]{ConSav96},
\begin{equation}\label{eq:faa-di-bruno}
D^\alpha \overline f(x) = - D^\alpha \log(e^{- \overline f(x)}) = \sum_{\substack{\beta_1, \dots \beta_k \\ \beta_1 + \dots + \beta_k = \alpha}} \lambda_{\alpha, \beta_1, \dots, \beta_k} \prod_{j=1}^k \mu_{\beta_j}\,,
\end{equation}
where the coefficients $\lambda_{\alpha, \beta_1, \dots, \beta_k}$ are combinatorial quantities related to partitions of $[k]$ whose precise value is unimportant.

Applying Lemma~\ref{lem:moment_bound} in Appendix \ref{app:technical} yields the claim.
\end{proof}

\begin{proposition}

\label{prop:bound}
Assume $P$ and $Q$ are subgaussian.
Let $(f,g)$ be the corresponding optimal dual potentials constructed in Proposition~\ref{extend-potentials}, and define
\begin{equation}\nonumber
R_n = S(P_n,Q)-\int f(x)\dd P_n(x).
\end{equation}

Then,
\begin{equation*}
\lim_{n\rightarrow \infty} n \var(R_n)=0.
\end{equation*}

\end{proposition}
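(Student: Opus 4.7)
The plan is to apply the Efron--Stein inequality to $R_n$, viewed as a symmetric function of the i.i.d.\ sample $X_1,\ldots,X_n$. This gives
\[
n\,\var(R_n) \;\leq\; \tfrac{n^2}{2}\, E\big[(R_n - R_n^{(1)})^2\big],
\]
where $R_n^{(1)}$ is obtained by replacing $X_1$ with an independent copy $X_1'$. It therefore suffices to show $n^2 E[(R_n-R_n^{(1)})^2]\to 0$.

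The crux is a one-sample sensitivity bound for $S(P_n,Q)$ itself. Writing $(f_n,g_n)$ and $(f_n^{(1)},g_n^{(1)})$ for the optimal dual potentials extended via Proposition~\ref{extend-potentials} for $(P_n,Q)$ and $(P_n^{(1)},Q)$, the fact that~\eqref{eq:dual-opt} holds for all $x,y\in\RR^d$ implies $\FA^{P_n^{(1)},Q}(f_n,g_n)=\int f_n\,\dd P_n^{(1)}+\int g_n\,\dd Q$. The dual lower bound $S(P_n^{(1)},Q)\geq \FA^{P_n^{(1)},Q}(f_n,g_n)$ together with its symmetric counterpart yields
\[
|S(P_n,Q) - S(P_n^{(1)},Q)| \leq \tfrac{1}{n}\max\big(|f_n(X_1)-f_n(X_1')|,\, |f_n^{(1)}(X_1)-f_n^{(1)}(X_1')|\big).
\]
Combining with the trivial identity $\int f\,\dd(P_n-P_n^{(1)})=\tfrac{1}{n}(f(X_1)-f(X_1'))$ leads to
\[
|R_n - R_n^{(1)}|\leq \tfrac{1}{n}\Big(|(f_n-f)(X_1)| + |(f_n-f)(X_1')| + |(f_n^{(1)}-f)(X_1)| + |(f_n^{(1)}-f)(X_1')|\Big).
\]
Plugging this into Efron--Stein and using that $(f_n,X_1)$ and $(f_n^{(1)},X_1')$ are equidistributed, we arrive at
\[
n\,\var(R_n)\;\leq\; C\,E\big[|f_n(X_1)-f(X_1)|^2\big].
\]

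The remaining task is to verify that $E[|f_n(X_1)-f(X_1)|^2]\to 0$. Proposition~\ref{convergence} yields $f_n(X_1)\to f(X_1)$ almost surely (uniform convergence on compacts implies pointwise convergence at the a.s.\ finite point $X_1$), so it suffices to exhibit an integrable dominant. Proposition~\ref{extend-potentials} combined with Lemma~\ref{lem:uniform_subg} provides a common (random) subgaussian parameter $\tilde\sigma$ for all $P_n$ and yields the polynomial bound $|f_n(x)|+|f(x)|\leq C(1+\tilde\sigma^4)(1+\|x\|^2)$. Consequently $|f_n(X_1)-f(X_1)|^2$ is dominated by $C(1+\tilde\sigma^8)(1+\|X_1\|^4)$, whose expectation is finite by Cauchy--Schwarz, Lemma~\ref{lem:proxy_moments}, and the subgaussianity of $P$. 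Dominated convergence then finishes the argument. The main obstacle is the coupling between the evaluation point $X_1$ and the data-dependent potentials $f_n$; we expect this to be resolved cleanly by the uniform moment bounds for $\tilde\sigma$ developed in Appendix~\ref{app:technical}.
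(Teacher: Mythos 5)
Your skeleton (Efron--Stein plus the one-sided dual sensitivity bound, applied in both directions) is sound and matches the paper's start: the inequality $|S(P_n,Q)-S(P_n',Q)|\leq \tfrac1n\max\bigl(|f_n(X_1)-f_n(X_1')|,\,|f_n'(X_1)-f_n'(X_1')|\bigr)$ is correct given that the potentials of Proposition~\ref{extend-potentials} satisfy \eqref{eq:dual-opt} everywhere. The genuine gap is in your final convergence step. For dominated convergence you need a dominating variable that does not depend on $n$, so the $\tilde\sigma$ in $C(1+\tilde\sigma^8)(1+\|X_1\|^4)$ must be the uniform-in-$n$ constant of Lemma~\ref{lem:uniform_subg}; but that lemma only gives $\tilde\sigma<\infty$ almost surely and provides no moment control, and Lemma~\ref{lem:proxy_moments} does not apply to it: that lemma bounds the moments of the per-$n$ proxy (the smallest $\tau$ making $P$, $Q$, $P_n$, $Q_n$ simultaneously $\tau^2$-subgaussian) uniformly in $n$, which is strictly weaker than a moment bound on a single constant valid for all $n$ at once (the latter would require, e.g., a maximal inequality for the reverse martingale $\tfrac1n\sum_i e^{\|X_i\|^2/(2kd\sigma^2)}$, an argument present neither in your proposal nor in the paper). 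If instead you take $\tilde\sigma=\tilde\sigma_n$ so that Lemma~\ref{lem:proxy_moments} does apply, your dominating function varies with $n$ and plain dominated convergence no longer suffices; you would have to switch to Vitali's theorem, i.e., combine the a.s. convergence $f_n(X_1)\to f(X_1)$ with uniform integrability of $|f_n(X_1)-f(X_1)|^2$, extracting the UI from the $n$-uniform moment bounds on $\tilde\sigma_n$ via Cauchy--Schwarz. That repaired route does appear to work, but as written the step ``expectation finite by Cauchy--Schwarz, Lemma~\ref{lem:proxy_moments}, and subgaussianity'' conflates the two constants and is not justified. (A minor additional point: after using equidistribution you are left with two distinct terms, $E|f_n(X_1)-f(X_1)|^2$ with $X_1$ in the sample and $E|f_n(X_1')-f(X_1')|^2$ with $X_1'$ independent of $f_n$; both must be shown to vanish, though by the same argument.)

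For comparison, the paper sidesteps exactly this integrability issue. It uses the one-sided Efron--Stein bound $\var(R_n)\leq\tfrac n2 E(R_n-R_n')_+^2$, proves $n(R_n-R_n')_+\to 0$ a.s. precisely as you do (dual bound plus Proposition~\ref{convergence}), but then establishes uniform integrability of $n^2(S(P_n,Q)-S(P_n',Q))_+^2$ by a purely primal device: it takes the optimal entropic coupling for $(P_n',Q)$, modifies it to have first marginal $P_n$ by moving the mass at $X_1'$ to $X_1$, observes that the mutual-information terms of the two couplings coincide, and bounds the resulting cost difference by $\tfrac12\norm{X_1-X_1'}\bigl(\tfrac{\norm{X_1}+\norm{X_1'}}{n}+2\int\norm{y}P'(X_1'|y)\,\dd Q(y)\bigr)$, whose second moment is controlled using only moments of $P$ and $Q$ (Cauchy--Schwarz and H\"older). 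No random potentials or random subgaussian constants enter that estimate, which is precisely what your dominated-convergence step would need but does not have.
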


Our proof relies on the tensorization property for the variance \citep{efron1981jackknife,boucheron2013concentration,van2014probability}, also known as Efron-Stein inequality: Let $X_1,\ldots X_n$ be i.i.d r.v's with distribution $P$ and $X'_1,\ldots X'_n$ be independent copies of $X_1,\ldots X_n$. Also, let $w$ be an arbitrary measurable function of the sample that is symmetric on its coordinates, and define $Z=\omega\left(X_1,\ldots X_n\right)$ and $Z'=\omega\left(X'_1, X_2, \ldots X_n\right)$. Then,
\begin{equation}
\label{eq:efstein}
Var(Z)\leq \frac{n}{2}E(Z-Z')_+^2.\end{equation}
\begin{proof}[Proof of Proposition~\ref{prop:bound}]
Denote by $P'_n$ the empirical distribution of $X'_1,X_2,\ldots X_n$,
and let
\begin{equation}\nonumber
R'_n = S(P'_n,Q)-\int f(x)\dd P'_n(x).
\end{equation}
by Efron-Stein, it suffices to show $\lim_{n\rightarrow \infty} n^2E(R_n-R'_n)^2_+=0$. We divide the proof in the verification of two statements.
First, we show $\lim_{n\rightarrow \infty}n(R_n-R'_n)_+=0$.
We will then show that $n^2 (R_n - R'_n)_+^2$ is uniformly integrable.

Call $(f_n,g_n)$ the optimal potentials associated to $(P_n,Q)$.
Since $P_n$ is subgaussian by Lemma~\ref{lem:uniform_subg} in Appendix \ref{app:technical}, Proposition~\ref{extend-potentials} implies that we can assume that $(f_n, g_n)$ satisfy the dual optimality conditions for all $x, y \in \RR^d$.
Therefore
\begin{align*}
S(P_n,Q) & = \int f_n(x) \, \dd P_n(x) + \int g_n(y)\, \dd Q(y), \\
S(P'_n,Q) &\geq \int f_n(x)\dd P'_n(x) + \int g_n(y)\dd Q(y) -\iint e^{f_n(x)+g_n(y)-\frac{1}{2}||x-y||^2}\dd P'_n(x)\dd Q(y) + 1 \\
& = \int f_n(x)\, \dd P'_n(x) + \int g_n(y) \, \dd Q(y)\,.
\end{align*}
Therefore,
\begin{equation*}
n(R_n-R'_n)_+ \leq (f_n(X_1)-f(X_1)) -  (f_n(X'_1)-f(X'_1))\,.
\end{equation*}
By Proposition \ref{convergence}, ($f_n,g_n)$ converges pointwise to $(f,g)$ almost surely, so $\lim_{n \to \infty} n(R_n - R_n')_+ = 0$ almost surely.

To show uniform integrability, we note that $n(R_n-R'_n)=n(S(P_n,Q)-S(P_n,Q)) - (f(X_1)-f(X'_1))$ and by Proposition~\ref{extend-potentials} and the subgaussianity of $P$, $f(X_1),f(X'_1)$ have finite second moments.
It therefore suffices to show that $n^2(S(P_n,Q) -S(P'_n,Q))^2_+$ is uniformly integrable.  

Let $\pi'$ be the underlying optimal entropic coupling between $P'_n$ and $Q$ that we disintegrate in terms of $Q$ and the (random) kernel $\{P'(\cdot|y)\}_y$ of conditional distributions over the sample $P'_n$ given $y$, i.e.

\begin{equation*}
\dd\pi'(x,y)=\dd Q(y)\left(P'(x|y)\delta_{X'_1}(x) +\sum_{i=2}^n P'(x|y) \delta_{X_i}(x)\right)\,.
\end{equation*}

%\begin{equation}\nonumber
%d\pi(x,y)=dQ(y) \sum_{i=1}^n P'(x|y) \delta_{X_i}(x).
%\end{equation}
%\begin{equation}\nonumber
%Z = \sum_{i=1}^n \int \frac{\norm{X_i-Y}^2}{2} P(X_i|y)dQ(y) +I(\pi).
%\end{equation}
%Now, for $Z'$ consider the optimal coupling 
%\begin{equation} d\pi'(x,y)=dQ(y)\left(P'(x|y)\delta_{X'_1}(x) +\sum_{i=2}^n P'(x|y) \delta_{X_i}(x)\right).\end{equation}
We now slightly modify $\pi'$ to make it have $P_n$ as first marginal; specifically, we define
\begin{equation}\nonumber
d\bar{\pi}(x,y)=\dd Q(y)\left(\sum_{i=1}^n \bar{P}(x|y) \delta_{X_i}(x)\right), \text{ with } \bar{P}(x|y)=\begin{cases} P'(X'_1|y) & x=X_1 \\ P'(X_i|y) & x=X_i,i\neq 1\end{cases}.
\end{equation}
By the definitions of $S(P_n, Q)$ and $S(P'_n, Q)$, it is easily verified that
\begin{equation}\nonumber
S(P_n, Q) \leq   \sum_{i=1}^n \int \frac{\norm{X_i-y}^2}{2} \bar{P}(X_i|y)\dd Q(y) +I(\bar{\pi}),
\end{equation}
and that
\begin{equation}\nonumber
S(P_n', Q) =    \int \frac{\norm{X'_1-y}^2}{2} P'(X'_1|y)\dd Q(y)+ \sum_{i=2}^n \int \frac{\norm{X_i-y}^2}{2} P'(X_i|y)\dd Q(y) +I(\pi')\,,
\end{equation}
where $I(\cdot)$ denotes mutual information.
Therefore, 
\begin{equation}\label{eq:entropydif}
S(P_n, Q)-S(P_n', Q) \leq I(\bar{\pi})-I(\pi')  + \int \frac{\norm{X_1-y}^2-\norm{X'_1-y}^2}{2} P'(X'_1|y)\dd Q(y). 
\end{equation}
Observe that $I(\bar{\pi})=I(\pi')$ since $I(\pi')$ doesn't depend on the sample values, but only in the way the conditionals $P'(\cdot|y)$ split over the sample, which by construction is the same for both $\bar{\pi}$ and $\pi'$. Therefore, we only need to bound the (expected squared) integral in \eqref{eq:entropydif}, and we proceed as in \cite{Del2019}. Specifically, we have
\begin{eqnarray*}
\label{eq:delbarriobound}S(P_n, Q)-S(P_n', Q)&\leq& \int \frac{\norm{X_1-y}^2-\norm{X'_1-y}^2}{2} P'(X'_1|y)\dd Q(y)\\ \nonumber &\leq &\frac{1}{2}\norm{X_1-X'_1}\left(\frac{\norm{X_1} +\norm{X'_1}}{n} +2\int \norm{y} P'(X'_1|y)\dd Q(y)\right),
 \end{eqnarray*}
from which it follows that
\begin{equation}
 \label{eq:expzpos}
 n^{2 } (S(P_n, Q)-S(P_n', Q))^{2}_+\leq (\norm{X_1-X'_1}^2\norm{X_1}^2)+n^2\norm{X_1-X'_1}^2\left(\int \norm{y} P'(X'_1|y)\dd Q(y)\right)^2.
\end{equation}
The first term is clearly uniformly integrable since $P$ has moments of all orders, so we focus on the second term.

By Cauchy-Schwartz,  
\begin{eqnarray}\label{eq:cz}
\nonumber E\left(\norm{X_1-X'_1}^{4}\left(\int \norm{y} P'(X'_1|y)\dd Q(y)\right)^{4}\right)^2 & \leq &E\left(\norm{X_1-X'_1}^{8}\right)\times \\  &&E\left(\left(\int \norm{y} P'(X'_1|y)\dd Q(y)\right)^8\right).\end{eqnarray}

And now, by H\"{o}lder's inequality\,,
\begin{eqnarray*}\nonumber 
\label{eq:boundnormy}\left(\int \norm{y} P'(X'_1|y)\dd Q(y)\right)^{8} &\leq & \left(\int P'(X'_1|y)\dd Q(y)\right)^{7}\left(\int \norm{y}^{8} P'(X'_1|y)\dd Q(y)\right) \\
& = &\frac{1}{n^{7}} \left(\int \norm{y}^{8} P'(X'_1|y)\dd Q(y)\right).
\end{eqnarray*}

Also, notice that the r.v's $\int \norm{y}^8 P'(X'_i|y)\dd Q(y)$ are equally distributed, and therefore
\begin{align*}
E\left(\int \norm{y}^8 P'(X'_1|y)\dd Q(y)\right)=\frac{1}{n}E\left(\sum_{i=1}^n \int \norm{y}^8 P'(X'_i|y)\dd Q(y)\right) =\frac{1}{n}E\left(\int \norm{y}^8 \dd Q(y)\right).
\end{align*}

We obtain
\begin{equation}
\label{eq:boundenormy2} 
E\left(\left(\int \norm{y} P'(X'_1|y)\dd Q(y)\right)^8\right) \leq \frac{1}{n^8} \int \norm{y}^8\dd Q(y).
\end{equation}

Together, \eqref{eq:cz}  and\eqref{eq:boundenormy2} imply that the quantity $n^2 \norm{X_1 - X_1'}^2 \left(\int \norm{y} P'(X'_1|y)\dd Q(y)\right)^2$ has uniformly bounded second moments, and is therefore uniformly integrable.
Therefore \mbox{$n^2(S(P_n, Q) - S(P_n', Q))_+^2$} is uniformly integrable as well, and combining this with the almost sure convergence implies the claim.
\end{proof}

\section{Technical Lemmas}\label{app:technical}
Throughout this appendix, the symbol $C$ will be used to indicate an unspecified positive constant whose value may change from line to line.
Subscripts will be used to indicate if $C$ depends on any other parameters.

\begin{lemma}\label{lem:subg_moments}
If $P$ is $\sigma^2$ subgaussian, then
\begin{equation*}
E_P \|X\|^{2k} \leq (2 d \sigma^2)^k k!
\end{equation*}
for all nonnegative integers $k$, and
\begin{equation*}
E_P e^{v \cdot X} \leq E_P e^{\|v\| \|X\|} \leq 2 e^{\frac{d \sigma^2}{2} \|v\|^2}
\end{equation*}
for all $v \in \RR^d$.
\end{lemma}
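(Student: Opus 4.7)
The plan is to derive both inequalities as elementary consequences of the defining subgaussianity hypothesis $E_P e^{\|X\|^2/(2d\sigma^2)} \leq 2$, using two classical tools: a term-by-term comparison in a power series expansion for the moment bound, and Young's inequality in the exponent for the moment generating function bound.

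For the first inequality, I would Taylor expand the defining exponential,
\[
e^{\|X\|^2/(2d\sigma^2)} = \sum_{k \geq 0} \frac{\|X\|^{2k}}{k!\,(2d\sigma^2)^k},
\]
and swap sum and expectation by monotone convergence (all summands are nonnegative). Each individual nonnegative term is then bounded by the full expectation, so $\frac{E_P\|X\|^{2k}}{k!(2d\sigma^2)^k} \leq E_P e^{\|X\|^2/(2d\sigma^2)} \leq 2$, which delivers the stated bound $E_P \|X\|^{2k} \leq (2d\sigma^2)^k k!$ up to the harmless constant factor built into the subgaussian definition.

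For the moment generating function, the first inequality $e^{v \cdot X} \leq e^{\|v\|\|X\|}$ is immediate from Cauchy--Schwarz applied inside the exponent. For the second inequality, the key step is Young's inequality $ab \leq \tfrac{a^2}{2c} + \tfrac{c b^2}{2}$ applied with $a = \|v\|$, $b = \|X\|$, and the optimal choice $c = d\sigma^2$, which gives the pointwise bound
\[
\|v\|\,\|X\| \leq \tfrac{d\sigma^2}{2}\|v\|^2 + \tfrac{1}{2 d\sigma^2}\|X\|^2.
\]
Exponentiating and taking expectation, the $\|v\|^2$ factor leaves the expectation as a deterministic constant, while the $\|X\|^2$ factor is absorbed directly by the subgaussian hypothesis, yielding the claimed $2\,e^{\frac{d\sigma^2}{2}\|v\|^2}$.

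I do not anticipate any serious obstacle: both claims follow in one or two lines from the definition, and the only book-keeping is to justify the interchange of sum and expectation (monotone convergence on nonnegative summands) and to select the Young's-inequality coefficient that matches the denominator $2d\sigma^2$ appearing in the subgaussian definition. In effect, this lemma is a warm-up that packages standard subgaussian arithmetic for later use.
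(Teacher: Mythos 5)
Your proposal follows essentially the same route as the paper on both claims: term-by-term comparison with the exponential series for the moments, and the pointwise bound $v\cdot X \leq \|v\|\|X\| \leq \frac{d\sigma^2}{2}\|v\|^2 + \frac{1}{2d\sigma^2}\|X\|^2$ followed by subgaussianity for the moment generating function. One small correction on the first claim: bounding a single series term by the full expectation gives $E_P\|X\|^{2k} \leq 2\,(2d\sigma^2)^k k!$, which is weaker than the stated constant; the paper avoids the factor $2$ by comparing with $e^{t}-1=\sum_{j\geq 1}t^j/j!$ rather than $e^{t}$, so that for $k\geq 1$ one gets $\frac{E_P\|X\|^{2k}}{(2d\sigma^2)^k k!} \leq E_P e^{\|X\|^2/(2d\sigma^2)}-1 \leq 1$ (the case $k=0$ being trivial). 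With that one-line adjustment your argument matches the lemma exactly.
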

\begin{proof}
For the first claim, it suffices to take expectations of both sides of the inequality \mbox{$\frac{\|X\|^{2k}}{(2 d \sigma^2)^k k!} \leq e^{\frac{\|X\|^2}{2 d \sigma^2}} - 1$} and use the assumption that $P$ is $\sigma^2$-subgaussian.
To prove the second claim, we use the inequality $v \cdot X \leq \|v\| \|X\| \leq \frac{d \sigma^2}{2}\|v\|^2 + \frac{1}{2 d \sigma^2} \|X\|^2$ and apply subgaussianity.
\end{proof}

 \begin{lemma}\label{lem:uniform_subg}
 Suppose $P$ is a $\sigma^2$-subgaussian measure. Then, there exists a (random) $\sigma_u < \infty$ such that $\{P_n\},P$ are uniformly $\sigma_u^2$-subgaussian $P$ almost surely.\label{usubgaussian}
 \end{lemma}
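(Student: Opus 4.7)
The plan is to exhibit a single random $\sigma_u$ that is large enough to simultaneously control $P$ and all the empirical measures $P_n$. Recall that $P_n$ is $\sigma_u^2$-subgaussian iff
\[
\bar{Z}_n(\sigma_u) \defeq \frac{1}{n}\sum_{i=1}^n e^{\|X_i\|^2/(2 d \sigma_u^2)} \leq 2,
\]
and $P$ is $\sigma_u^2$-subgaussian iff $E_P e^{\|X\|^2/(2 d\sigma_u^2)} \leq 2$. Since $\sigma_u \geq \sigma$ will automatically ensure the latter (the exponent decreases), the real content is to make the $\bar Z_n(\sigma_u)$ uniformly bounded by $2$.

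First I would pick a deterministic $\tau_0 > \sigma$ such that $E_P e^{\|X\|^2/(2 d \tau_0^2)} \leq 3/2$. This is possible by dominated convergence: for $\tau \geq \sigma$, the integrand $e^{\|X\|^2/(2 d\tau^2)}$ is dominated by $e^{\|X\|^2/(2 d\sigma^2)}$ which is integrable by the subgaussian assumption, and it tends to $1$ pointwise as $\tau \to \infty$; hence the expectation tends to $1$. Then by the strong law of large numbers applied to the i.i.d.\ sequence $\{e^{\|X_i\|^2/(2 d\tau_0^2)}\}_i$,
\[
\bar{Z}_n(\tau_0) \xrightarrow[n \to \infty]{\text{a.s.}} E_P e^{\|X\|^2/(2 d \tau_0^2)} \leq 3/2.
\]
Consequently, on a probability-one event there exists a (random) $N=N(\omega)$ such that $\bar{Z}_n(\tau_0) \leq 2$ for every $n \geq N$.

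The second step is to absorb the finitely many possible violations $n < N$. For each fixed sample $X_1,\dots,X_{N-1}$ and each $n < N$, the quantity $\bar{Z}_n(\tau)$ is continuous and monotonically decreasing in $\tau$ with limit $1$ as $\tau \to \infty$, so there exists some $\tau_n(\omega) \geq \tau_0$ with $\bar{Z}_n(\tau_n(\omega)) \leq 2$. Setting
\[
\sigma_u(\omega) \defeq \max\bigl(\tau_0,\, \tau_1(\omega),\dots,\tau_{N(\omega)-1}(\omega)\bigr),
\]
I get $\sigma_u < \infty$ almost surely. For $n < N$ the bound holds by construction; for $n \geq N$, monotonicity in $\tau$ gives $\bar{Z}_n(\sigma_u) \leq \bar{Z}_n(\tau_0) \leq 2$; and $\sigma_u \geq \sigma$ yields $E_P e^{\|X\|^2/(2 d\sigma_u^2)} \leq 2$.

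There is no real obstacle here; the only mildly delicate point is producing a single $\sigma_u$ that works for \emph{all} $n$ (and not only for large $n$, as the LLN directly gives), which is handled by the two-stage choice above.
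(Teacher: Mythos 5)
Your proof is correct, and it rests on the same engine as the paper's: the strong law of large numbers applied to the i.i.d.\ variables $e^{\|X_i\|^2/(2d\tau^2)}$. The difference is in how the almost-sure control is converted into a single subgaussian parameter valid for \emph{every} $n$. The paper runs the SLLN at the original $\sigma$, notes that the sequence $E_{P_n}e^{\|X\|^2/(2d\sigma^2)}$ is a.s.\ bounded by a random constant $C$, and then invokes the rescaling fact stated in the preliminaries (a bound $C\ge 2$ on the exponential moment at parameter $\sigma$ implies $C\sigma^2$-subgaussianity, via Jensen), so the random inflation goes into the variance proxy and all $n$ are handled at once. You instead run the SLLN at a slightly larger deterministic $\tau_0$ chosen so the limit is $3/2<2$, which makes $P_n$ $\tau_0^2$-subgaussian for all $n\ge N(\omega)$, and then patch the finitely many indices $n<N$ using monotonicity and the fact that $\bar Z_n(\tau)\to 1$ as $\tau\to\infty$ for a fixed finite sample. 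Your route avoids the equivalence-of-definitions lemma and yields a deterministic proxy for all large $n$, at the cost of the extra two-stage bookkeeping; the paper's argument is shorter but leans on that rescaling fact. One cosmetic remark: to make $\sigma_u$ manifestly a random variable you could take $\tau_n(\omega)$ to be the infimum over $\tau\ge\tau_0$ with $\bar Z_n(\tau)\le 2$, but this is a detail, not a gap.
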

 \begin{proof} By definition, there exists $\sigma>0$ such that \mbox{$E_P\left(e^{\frac{||X||^2}{2\sigma^2d}}\right)\leq 2$}. By the strong law of large numbers we have that $P$ almost surely
 $$\lim_{n\rightarrow} E_{P_n} \left(e^{\frac{||X||^2}{2d\sigma^2}}\right)=E_P\left(e^{\frac{||X||^2}{2\sigma^2d}}\right)\leq 2\,.$$
 In particular, this implies the sequence $E_{P_n} \left(e^{\frac{||X||^2}{2\sigma^2d}}\right)$ is bounded by a random positive number. By the equivalence of definitions of subgaussianity, this implies that $P_n$ are uniformly subgaussian, with a new parameter that we call $\sigma_u^2$.
 \end{proof}

\begin{lemma}\label{lem:moment_bound}
Let $\mu_\beta$ be defined as in \eqref{eq:generalized_moments}.
Then
\begin{equation*}
|\mu_{\beta}| \leq C_{|\beta|, d} \left\{
\begin{array}{ll}
\sigma^{|\beta|}(\sigma+\sigma^2)^{|\beta|} & \|x\| \leq \sqrt d \sigma \\
\sigma^{|\beta|}(\sqrt{\sigma\|x\|} + \sigma \|x\|)^{|\beta|} &  \|x\| > \sqrt d \sigma\,.
\end{array}\right.
\end{equation*}
\end{lemma}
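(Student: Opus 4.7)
The approach is to write $\mu_\beta$ as a moment of a tilted probability measure on $\mathbb{R}^d$, and then control that moment through a tail estimate derived from the pointwise bounds of Proposition~\ref{extend-potentials} together with the subgaussian MGF control provided by Lemma~\ref{lem:subg_moments}.

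First, I would write
\[
\mu_\beta = E_{\pi_x}[Y^\beta], \qquad \pi_x(\dd y) := e^{f(x)+g(y)-\frac{1}{2}\|x-y\|^2}\,\dd Q(y),
\]
which is a bona fide probability measure by the extended optimality conditions of Proposition~\ref{extend-potentials}. Since $|y^\beta|\leq \|y\|^{|\beta|}$, the task reduces to bounding $E_{\pi_x}\|Y\|^{|\beta|}$.

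Next, I would derive a tail estimate for $\pi_x$. Substituting the upper bound $g(y)\leq \tfrac{1}{2}(\|y\|+\sqrt{2d}\sigma)^2$ from Proposition~\ref{extend-potentials}, the quadratic $\tfrac{1}{2}\|y\|^2$ cancels against $-\tfrac{1}{2}\|x-y\|^2$, yielding
\[
\pi_x(\|Y\|>t) \leq e^{f(x)+d\sigma^2-\tfrac{1}{2}\|x\|^2}\int_{\|y\|>t} e^{(\sqrt{2d}\sigma+\|x\|)\|y\|}\,\dd Q(y).
\]
Combining Cauchy-Schwarz with the subgaussian concentration of $Q$ and the MGF bound from Lemma~\ref{lem:subg_moments}, the right-hand integral decays like $e^{-t^2/(C_d\sigma^2)}$ once $t$ exceeds an appropriate threshold $T(x,\sigma)$. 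The prefactor is in turn controlled by the upper bound on $f(x)$ from Proposition~\ref{extend-potentials}. The moment is then extracted from $E_{\pi_x}\|Y\|^k = k\int_0^\infty t^{k-1}\pi_x(\|Y\|>t)\,\dd t$, split into a dominant near-range contribution bounded by $T^k$ and a negligible far-range contribution.

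Finally, I would optimize $T$ in each of the two regimes. For $\|x\|\leq\sqrt{d}\sigma$, the relevant threshold scales like $\sigma(\sigma+\sigma^2)$, yielding the bound $\sigma^{|\beta|}(\sigma+\sigma^2)^{|\beta|}$; for $\|x\|>\sqrt{d}\sigma$ the optimal threshold scales like $\sigma(\sqrt{\sigma\|x\|}+\sigma\|x\|)$, giving the second bound. The leading $\sigma^{|\beta|}$ factor in both cases reflects the inherent subgaussian scale of $Q$ entering through Lemma~\ref{lem:subg_moments}.

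The main obstacle is controlling the prefactor $e^{f(x)+d\sigma^2-\frac{1}{2}\|x\|^2}$: the upper bounds on $f$ and $g$ from Proposition~\ref{extend-potentials} are not simultaneously tight, and combining them naively produces stray exponential factors of size $e^{O(\sigma^4+\sigma^2\|x\|^2)}$ that would destroy the polynomial dependence claimed by the lemma. Avoiding this blowup forces one to treat the identity $e^{-f(x)}=\int e^{g(y)-\frac{1}{2}\|x-y\|^2}\,\dd Q(y)$ as an exact constraint, so that the completed-square contributions arising from bounding $g$ from above in the numerator cancel against those coming from the normalization. Recovering the precise $\sqrt{\sigma\|x\|}$ dependence in the large-$\|x\|$ regime further requires a careful balance between the linear and quadratic contributions to the tail rate, rather than their crude sum. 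This coupling is the technical heart of the argument.
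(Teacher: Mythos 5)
Your proposal is correct and follows essentially the same route as the paper: you view $\mu_\beta$ as a moment of the tilted probability measure $\pi_x$ (legitimate because of the extended optimality conditions of Proposition~\ref{extend-potentials}), truncate at a threshold, bound the tail using the pointwise bounds on $f$ and $g$ together with Cauchy--Schwarz and the subgaussian estimates of Lemma~\ref{lem:subg_moments}, and choose the threshold of order $\sigma(\sigma+\sigma^2)$, respectively $\sigma(\sqrt{\sigma\|x\|}+\sigma\|x\|)$, so the layer-cake step is only a cosmetic variant of the paper's direct split of the integral at $\|y\|\leq\tau$. One correction to your closing remark: no cancellation of completed squares is required beyond using the identity $e^{-f(x)}=\int e^{g(y)-\frac 12\|x-y\|^2}\,\dd Q(y)$ to identify the normalization (which you already do in defining $\pi_x$), since the paper's proof also incurs the $e^{O(\sigma^4+\sigma^2\|x\|^2)}$ factor from the moment generating function term and simply absorbs it, together with the $e^{O(\sigma^2+\sigma\|x\|)}$ prefactor coming from the upper bound on $f$, into the Gaussian tail $e^{-\tau^2/8d\sigma^2}$ by the choice of threshold, which is exactly what produces the $\sigma^2\|x\|$ and $\sigma^{3/2}\sqrt{\|x\|}$ scalings in the statement.
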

\begin{proof}
To bound $\mu_\beta$, we split the integral in the numerator according to the norm of $y$.
Let \mbox{$A = \{y : \|y\| \leq \tau\}$}, where $\tau$ is a threshold to be chosen.
Then
\begin{equation*}
\mu_\beta = \frac{ \int \mathds{1}_{A} y^\beta e^{g(y) - \frac 12 \|y\|^2 + x \cdot y} \, \dd Q(y)}{\int e^{g(y) - \frac 12 \|y\|^2 + x \cdot y} \, \dd Q(y)} + \frac{ \int \mathds{1}_{\overline A} y^\beta e^{g(y) - \frac 12 \|y\|^2 + x \cdot y} \, \dd Q(y)}{\int e^{g(y) - \frac 12 \|y\|^2 + x \cdot y} \, \dd Q(y)}\,.
\end{equation*}
The first term is clearly bounded by $\tau^\beta$.
For the second, we apply Proposition~\ref{extend-potentials} to show
\begin{equation*}
\left(\int e^{g(y) - \frac 12 \|y\|^2 + x \cdot y} \, \dd Q(y)\right)^{-1} = e^{-\frac 12 \|x\|^2}e^{f(x)} \leq e^{d \sigma^2 + \sqrt d \sigma \|x\|}
\end{equation*}
and
\begin{equation*}
e^{g(y) - \frac 12 \|y\|^2} \leq e^{d \sigma^2 + \sqrt d \sigma \|y\|}\,.
\end{equation*}
We obtain
\begin{align*}
\frac{ \int \mathds{1}_{\overline A} y^\beta e^{g(y) - \frac 12 \|y\|^2 + x \cdot y} \, \dd Q(y)}{\int e^{g(y) - \frac 12 \|y\|^2 + x \cdot y} \, \dd Q(y)} & \leq e^{2d \sigma^2 + \sqrt d \sigma \|x\|} \int \1_{\overline A} y^\beta e^{\sqrt d \sigma \|y\| + x \cdot y} \, \dd Q(y) \\
& \leq e^{2d \sigma^2 + \sqrt d \sigma \|x\|} \left(\int \1_{\overline A} y^{2 \beta} \, \dd Q(y) \right)^{1/2} \left(\int e^{2(\sqrt d \sigma +  \|x\|) \|y\|} \, \dd Q(y)\right)^{1/2}
\end{align*}
Since $Q$ is subgaussian, Lemma~\ref{lem:subg_moments} in Appendix \ref{app:technical} and the definition of $A$ imply
\begin{equation*}
\left(\int \1_{\overline A} y^{2 \beta} \, \dd Q(y) \right)^{1/2} \leq 
e^{- \frac{\tau^2}{8 d \sigma^2}} \left(\int e^{\frac{\|y\|^2}{4 d \sigma^2}} y^{2 \beta} \, \dd Q(y) \right)^{1/2} \leq \sqrt 2 e^{- \frac{\tau^2}{8 d \sigma^2}} (2|\beta|)!^{1/4} (\sqrt{2 d}\sigma)^{|\beta|}\,.
\end{equation*}
Lemma~\ref{lem:subg_moments} in Appendix \ref{app:technical} also implies
\begin{equation*}
\int e^{2(\sqrt d \sigma +  \|x\|) \|y\|} \, \dd Q(y) \leq 2 e^{2 d \sigma^2 (\|x\| + \sqrt d \sigma)^2}\,.
\end{equation*}

Therefore, if we choose $\tau^2 \geq C_{|\beta|,d} (\sigma^4 + \sigma^6)$ if $\|x\| \leq \sqrt d \sigma$ and $\tau^2 \geq C_{|\beta|,d} (\sigma^3\|x\| + \sigma^4 \|x\|^2)$ if $\|x\| > \sqrt d \sigma$ for a sufficiently large constant $C_{|\beta|,d}$, then we will have
\begin{equation*}
\frac{ \int \mathds{1}_{\overline A} y^\beta e^{g(y) - \frac 12 \|y\|^2 + x \cdot y} \, \dd Q(y)}{\int e^{g(y) - \frac 12 \|y\|^2 + x \cdot y} \, \dd Q(y)} \leq C_{|\beta|, d} (\sqrt d \sigma)^{|\beta|}
\end{equation*}
Combining this with the bound on the first term yields the claim.
\end{proof}

\begin{lemma}\label{lem:proxy_moments}
Let $\tilde \sigma$ be defined as in the proof of Theorem~\ref{theo1}.
Then for any positive integer $k$,
\begin{equation*}
E \tilde \sigma^{2k} \leq 2 k^k \sigma^{2k}\,.
\end{equation*}
\end{lemma}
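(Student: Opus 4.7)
The plan is to obtain an almost-sure upper bound on $\tilde\sigma^2$ in terms of the two random quantities $S_n := \tfrac{1}{n}\sum_{i=1}^n e^{\|X_i\|^2/(2d\sigma^2)}$ and $S_n' := \tfrac{1}{n}\sum_{i=1}^n e^{\|Y_i\|^2/(2d\sigma^2)}$, and then to integrate this bound against the $k$-th power. The key observation is that for any $\tau \geq \sigma$, applying Jensen's inequality to the concave map $x \mapsto x^{\sigma^2/\tau^2}$ yields
\[
E_{P_n} e^{\|X\|^2/(2d\tau^2)} \;=\; E_{P_n} \bigl(e^{\|X\|^2/(2d\sigma^2)}\bigr)^{\sigma^2/\tau^2} \;\leq\; S_n^{\sigma^2/\tau^2},
\]
which is at most $2$ as soon as $\tau^2 \geq \sigma^2 \log_2 S_n$. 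The same reasoning applies to $Q_n$, and since $P$ and $Q$ are already $\sigma^2$-subgaussian, I conclude
\[
\tilde\sigma^2 \;\leq\; \sigma^2\,\max\bigl(1,\; \log_2 S_n,\; \log_2 S_n'\bigr)\qquad\text{almost surely.}
\]

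Next, to control the moments of $\log S_n$ I would use the elementary inequality $y^k/k! \leq e^y$ for $y \geq 0$: since every $V_i := e^{\|X_i\|^2/(2d\sigma^2)} \geq 1$ we have $\log S_n \geq 0$, and setting $y = \log S_n$ gives $(\log S_n)^k \leq k!\, S_n$. The subgaussianity of $P$ then gives $E S_n \leq 2$, whence
\[
E\,(\log_2 S_n)^k \;\leq\; \frac{k!}{(\log 2)^k}\, E S_n \;\leq\; \frac{2\,k!}{(\log 2)^k},
\]
and the same bound holds by symmetry for $S_n'$.

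Finally, raising the almost-sure bound to the $k$-th power and using the elementary inequality $\max(a,b,c)^k \leq a^k + b^k + c^k$ for nonnegative $a,b,c$ produces
\[
E\,\tilde\sigma^{2k} \;\leq\; \sigma^{2k}\bigl(1 + 2\,E\,(\log_2 S_n)^k\bigr) \;\leq\; \sigma^{2k}\Bigl(1 + \frac{4\,k!}{(\log 2)^k}\Bigr),
\]
which, after invoking $k! \leq k^k$ and absorbing the constant, can be placed in the form $2k^k\sigma^{2k}$. The only genuine subtlety is the first step: one must avoid the naive sufficient condition $\tilde\sigma^2 \leq \sigma^2 S_n$ (which would require control of $E S_n^k$, a quantity that is generically \emph{infinite} for $k\geq 2$ since the $V_i$ need not possess any exponential moments beyond the first). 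Using Jensen's inequality on the concave map $x \mapsto x^p$ rather than on $\log$ is precisely what keeps the dependence on $S_n$ logarithmic and thereby guarantees that all higher moments of $\tilde\sigma^2$ are finite with the desired polynomial scaling in $\sigma$.
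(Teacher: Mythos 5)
Your argument is correct in substance, but it takes a genuinely different route from the paper's proof. The paper first shows that if $t = E_P e^{\|X\|^2/\alpha} < \infty$ then $P$ is $(t\alpha/2d)$-subgaussian (via $t^{1/t} \le e^{1/e} < 2$), applies this with the $k$-dependent choice $\alpha = 2kd\sigma^2$ to get $\tilde\sigma^2 \le k\sigma^2 \max\{E_{P_n} e^{\|X\|^2/(2kd\sigma^2)},\, E_{Q_n} e^{\|Y\|^2/(2kd\sigma^2)}\}$, and then uses Jensen in the form $(E e^{Z/k})^k \le E e^{Z}$ to convert the $k$-th power of this random proxy into a single first moment, which subgaussianity bounds by $2$. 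You instead establish the $k$-independent almost-sure bound $\tilde\sigma^2 \le \sigma^2 \max(1, \log_2 S_n, \log_2 S_n')$, where $S_n = \frac1n\sum_i e^{\|X_i\|^2/(2d\sigma^2)}$ and $S_n'$ is its analogue for $Q_n$, and then control the moments of the logarithm through $(\log S_n)^k \le k!\, S_n$ together with $E S_n \le 2$. Both devices are cures for the same obstruction you correctly diagnose (the naive bound $\tilde\sigma^2 \le \sigma^2 S_n$ would require $E S_n^k$, which is generically infinite for $k \ge 2$): the paper dilutes the exponent by a factor $k$ before raising to the $k$-th power, while you pass to a logarithm. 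Your route is arguably cleaner in that the almost-sure bound on $\tilde\sigma^2$ does not depend on $k$, and the resulting moment growth $k!(\log 2)^{-k}$ is asymptotically smaller than $k^k$.

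One small overstatement: your final bound $\sigma^{2k}\bigl(1 + 4\,k!/(\log 2)^k\bigr)$ cannot be ``absorbed'' into $2k^k\sigma^{2k}$; at $k=1$ it is roughly $6.8\,\sigma^{2}$, and it exceeds $2k^k\sigma^{2k}$ for $k \le 3$. What you actually prove is $E\tilde\sigma^{2k} \le C k^k \sigma^{2k}$ for a universal constant $C$ (about $7$). This is immaterial to the paper: its own proof ends with the constant $4$ rather than the stated $2$, and the lemma is only invoked in the proof of Theorem~\ref{theo1} in the form $E\tilde\sigma^{2k} \le C_k \sigma^{2k}$, so your version serves the same purpose. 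Also, a cosmetic point: $S_n$ and $S_n'$ are not identically distributed unless $P = Q$, so rather than writing $1 + 2E(\log_2 S_n)^k$ you should bound the two expectations separately by the common quantity $2k!/(\log 2)^k$, exactly as you in fact do.
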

\begin{proof}
First, let $P$ be an arbitrary probability distribution, and let $\alpha > 0$.
We first show that if $t = E_P e^{\frac{\|X\|^2}{\alpha}}$ is finite, then $P$ is $t \frac{\alpha}{2d}$-subgaussian.
To see this, set $\tau^2 = t \frac{\alpha}{2d}$.
Then
\begin{equation*}
E e^{\frac{\|X\|^2}{2 d \tau^2}} \leq \left(E e^{\frac{\|X\|^2}{\alpha}}\right)^\frac{\alpha}{2 d \tau^2} = t^{1/t} \leq e^{1/e} < 2\,,
\end{equation*}
where the first step uses Jensen's inequality and the fact that $t \geq 1$.

The above considerations imply that if $Q$ is $\sigma^2$ subgaussian and we set
\begin{equation*}
\tau^2 = \max\{E_{P_n} e^{\frac{\|X\|^2}{2kd \sigma^2}} k \sigma^2, E_{Q_n} e^{\frac{\|Y\|^2}{2kd \sigma^2}} k \sigma^2\}\,,
\end{equation*}
then $P_n$, $Q_n$, $P$, and $Q$ are all $\tau^2$ subgaussian, which implies that $\tilde \sigma^2 \leq \tau^2$.

Therefore, by Jensen's inequality,
\begin{equation*}
\tilde \sigma^{2k} \leq E_{P_n} e^{\frac{\|X\|^2}{2d \sigma^2}} k^k  \sigma^{2k} +  E_{Q_n} e^{\frac{\|Y\|^2}{2d \sigma^2}} k^k  \sigma^{2k}\,,
\end{equation*}
and taking expectations with respect to $P$ and $Q$ yields
\begin{equation*}
E \tilde \sigma^{2k} \leq E_P e^{\frac{\|X\|^2}{2d \sigma^2}} k^k \sigma^{2k}+ E_Q  e^{\frac{\|Y\|^2}{2d \sigma^2}} k^k \sigma^{2k} \leq 4 k^k \sigma^{2k}\,.
\end{equation*}
\end{proof}

\bibliographystyle{apalike}
\bibliography{bibliography}

\end{document}